\documentclass[10pt,a4paper,reqno]{amsart}
\usepackage{amssymb,amsmath,amsthm,amstext,amsfonts}

\usepackage{enumitem}

\usepackage[most]{tcolorbox}

\usepackage{comment} % para poder comentar dentro de caixas com \begin{comment} comentário \end{comment}

\usepackage{hyperref}
\RequirePackage[dvipsnames]{xcolor} % [dvipsnames]
\definecolor{halfgray}{gray}{0.55} 
\definecolor{webgreen}{rgb}{0,0.5,0}
\definecolor{webbrown}{rgb}{.6,0,0} \hypersetup{%
	colorlinks=true, linktocpage=true, pdfstartpage=3,
	pdfstartview=FitV,%
	breaklinks=true, pdfpagemode=UseNone, pageanchor=true,
	pdfpagemode=UseOutlines,%
	plainpages=false, bookmarksnumbered, bookmarksopen=true,
	bookmarksopenlevel=1,%
	hypertexnames=true,
	pdfhighlight=/O,%hyperfootnotes=true,%nesting=true,%frenchlinks,%
	urlcolor=webbrown, linkcolor=RoyalBlue,
	citecolor=webgreen, %pagecolor=RoyalBlue,%
	pdftitle={},%
	pdfauthor={},%
	pdfsubject={2000 MAthematical Subject Classification: Primary:},%
	pdfkeywords={},%
	pdfcreator={pdfLaTeX},%
	pdfproducer={LaTeX with hyperref}%
}

\newtheorem{theorem}{Theorem}[section]
\newtheorem{lemma}[theorem]{Lemma}

\theoremstyle{definition}

\newtheorem{example}[theorem]{Example}

\numberwithin{equation}{section}
\theoremstyle{plain}

\numberwithin{equation}{section} %% Comment out for sequentially-numbered
\numberwithin{figure}{section} %% Comment out for sequentially-numbered
\theoremstyle{plain}
\theoremstyle{plain}
\theoremstyle{remark}
\newtheorem*{acknowledgement*}{Acknowledgement}

\DeclareMathOperator{\Id}{Id}

\newcommand{\cA}{{\mathcal A}}

\newcommand{\R}{{\mathbb R}}
\newcommand{\Z}{{\mathbb Z}}
\newcommand{\N}{{\mathbb N}}

\def\Reg{\mathcal{R}_\mu}

\usepackage{orcidlink}

\begin{document}
\title[Periodic approximation of LE for cocycles with holonomies]{Periodic approximation of Lyapunov exponents for cocycles admitting invariant holonomies}

\author[Lucas Backes]{Lucas Backes \orcidlink{0000-0003-3275-1311}}
\author[Breno Rilho Lemos]{Breno Rilho Lemos \orcidlink{0009-0000-8370-916X}}
\author[Breno Rocha]{Breno Rocha \orcidlink{0009-0009-5852-2753}}

\address{\noindent Departamento de Matem\'atica, Universidade Federal do Rio Grande do Sul, Av. Bento Gon\c{c}alves 9500, CEP 91509-900, Porto Alegre, RS, Brazil.}
\email{lucas.backes@ufrgs.br} 
\email{rilho.lemos@ufrgs.br}
\email{breno.rocha@ufrgs.br}

\keywords{Linear cocycles, Lyapunov exponents, periodic points, approximation, continuous setting}
\subjclass[2020]{Primary: 37H15, 37A20; Secondary: 37D25}

\begin{abstract}
Classical results establish that the Lyapunov exponents of an ergodic measure for linear cocycles over hyperbolic systems can be approximated by the Lyapunov exponents of periodic orbits, provided the cocycle is H\"older continuous. A recent counterexample by Bochi demonstrates that this approximation property fails in general if the H\"older assumption is relaxed to mere continuity. In this paper, we introduce a geometric condition that successfully substitutes this analytical regularity hypothesis. More precisely, we prove that if a cocycle - even a discontinuous one - admits a continuous family of invariant holonomies, the periodic approximation of Lyapunov exponents still holds.
Our geometric approach yields a proof that is substantially simpler and more direct than existing arguments in the literature, even when applied to classical settings such as fiber-bunched cocycles for which previous results were already available.
\end{abstract}

\maketitle
\section{Introduction}

The study of Lyapunov exponents for linear cocycles over hyperbolic dynamical systems is a central theme in smooth ergodic theory. A classical and widely investigated problem in this setting is the approximation of Lyapunov exponents associated with an ergodic measure by the Lyapunov exponents of periodic orbits. There is a rich and well-established literature showing that this approximation property holds under suitable conditions. We refer the reader to the works \cite{Bac, BD, Kal, KS2} and the references therein. However, a crucial hypothesis underlying all of these approximation results is that the cocycle possesses a certain degree of analytical regularity. Specifically, they all assume that the cocycle is H\"older continuous.

It is natural to ask whether this H\"older continuity assumption is strictly necessary, or if mere continuity of the cocycle is sufficient to guarantee the periodic approximation of Lyapunov exponents. Recently, Bochi \cite{Boc26} answered this question in the negative. By means of an explicit counterexample, he demonstrated that if the H\"older continuity assumption is removed, the approximation property fails in general. Consequently, to extend approximation results to the merely continuous setting, one must introduce an alternative mechanism to control the behavior of the cocycle over periodic orbits.

The aim of this note is to exhibit a broad class of continuous cocycles, which are not necessarily H\"older continuous, for which the periodic approximation property still holds. We achieve this by replacing the standard analytical regularity hypothesis with a natural geometric condition. More precisely, we prove that if a continuous cocycle admits a continuous family of invariant holonomies, then the Lyapunov exponents of any ergodic measure can indeed be approximated by the Lyapunov exponents of periodic points. In fact, our proof does not strictly rely on the continuity of the cocycle itself: it remains valid even for discontinuous cocycles, provided they admit continuous holonomies on a suitable open set.

The geometric condition we impose is highly natural, as invariant holonomies play a fundamental role in the study of cocycles over hyperbolic systems. For instance, continuous holonomies are a crucial tool in the study of the cohomological equation and rigidity phenomena \cite{Bac15, KS1}. Furthermore, the existence of such holonomies is a central ingredient in establishing the continuity of Lyapunov exponents \cite{BBB}. By exploiting this intrinsic geometric structure, we are able to bypass the local distortion estimates that typically necessitate H\"older bounds.

Finally, we emphasize that our geometric approach presents some advantages beyond the relaxation of the regularity hypothesis. Even in classical situations where the previous approximation results already apply, such as in the context of fiber-bunched cocycles, which are H\"older continuous and naturally admit invariant holonomies, the proof we present here is substantially simpler and more direct than previous arguments in the literature.

\section{Statements}
Let $(M,d)$ be a compact metric space and $f\colon M\to M$ be a homeomorphism. 

\subsection{Local product structure}
Given any $x\in M$ and $\varepsilon >0$, define the \emph{local stable} and \emph{local unstable sets}, respectively, by
\begin{align*}
   W^s_\varepsilon(x) &:= \left\{y\in M : d(f^n(x),f^n(y))\le\varepsilon,\ \forall n \ge 0\right\}
\end{align*}
and
\begin{align*}
   W^u_\varepsilon(x) &:= \left\{y\in M : d(f^n(x),f^n(y))\le\varepsilon,\ \forall n \le 0\right\}.
\end{align*}
We say that $f$ has \emph{continuous local product structure} if there exist constants $\varepsilon, \tau>0$ such that whenever $d(x,y)\le\tau$, the sets $W^s_\varepsilon(x)$ and $W^u_\varepsilon(y)$ intersect in a unique point, denoted by $[x,y]$, which depends continuously on $x$ and $y$.

\subsection{Periodic closing property} 
The map $f\colon M\to M$ is said to have the \emph{periodic closing property} if for every $\delta>0$ there exists $\varepsilon_0>0$ such that for any $x\in M$ and $n\in \N$ satisfying $d(x,f^n(x))<\varepsilon_0$, there exists a periodic point $p\in M$ with $f^n(p)=p$ such that 
 \[d(f^i(x),f^i(p))\le \delta \quad \text{for every } i=0,1,2,\ldots,n. \]

Notice that shifts of finite type and basic pieces of Axiom A diffeomorphisms are particular examples of homeomorphisms with local product structure exhibiting the periodic closing property (see for instance \cite[Chapter IV, \S~9]{ManeBook} for details).

\subsection{Cocycles and invariant holonomies} \label{sec: cocycles and inv hol}
Given a continuous map $A\colon M\to GL(d,\R)$, the \emph{cocycle} generated by $A$ over $f$ is defined as the map $\cA\colon \Z\times M\to GL(d,\R)$ given by
\begin{equation*}\label{def:cocycles}
A^n(x):=\cA(n, x)=
\begin{cases}
	A(f^{n-1}(x))\cdots A(f(x))A(x)  & \text{if } n>0, \\
	\Id & \text{if } n=0,\\
	A(f^{n}(x))^{-1}\cdots A(f^{-1}(x))^{-1} & \text{if } n<0
\end{cases}
\end{equation*}
for all $x\in M$. In what follows, we will refer to the cocycle generated by $A$ over $f$ simply as the cocycle $A$.

We say that $A$ admits a family of \emph{(continuous) invariant stable and unstable holonomies} if there exist (continuous) maps
\[H^s\colon \{(x,y) \in M \times M : y \in W^s_\varepsilon(x)\} \to GL(d, \R)\]
and
\[H^u\colon \{(x,y) \in M \times M : y \in W^u_\varepsilon(x)\} \to GL(d, \R)\]
satisfying
\begin{itemize}
    \item[i)]  $H^s_{x, x} = \Id$ and $H^u_{x, x} = \Id$;
    \item[ii)] $H^{s}_{y, z}=H^{s}_{x,z}H^{s}_{y,x}$ and $H^{u}_{y, z}=H^{u}_{x,z}H^{u}_{y,x}$;
    \item[iii)] $H^s_{f(x) , f(y)} A(x) = A(y)  H^s_{x , y} \quad\text{and}\quad H^u_{f(x) , f(y)} A(x) = A(y)  H^u_{x , y}$,
\end{itemize}
whenever these  holonomies are defined, and for $x,y$ and $z$ in the appropriate stable or unstable sets. In Section \ref{sec: examples} we present some examples of cocycles admitting such a family of invariant holonomies.

\subsection{Lyapunov exponents} 
Given an ergodic $f$-invariant Borel probability measure $\mu$, it follows by Oseledets' Theorem \cite[Theorem 4.2]{Via14} that there exists a full $\mu$-measure set $\Reg \subset M$, whose points are called $\mu$-regular points, such that for every $x\in \Reg$ there exist numbers $\lambda _1(\mu)>\lambda_2(\mu) > \cdots > \lambda_{\ell}(\mu)$, called \emph{Lyapunov exponents}, and a direct sum decomposition $\R^d=E^{1}_{x}\oplus E^{2}_{x}\oplus \ldots \oplus E^{\ell}_{x}$ into vector subspaces (called \emph{Oseledets subspaces}) which depend measurably on $x$ such that, for every $1\le i \le \ell$,
\begin{itemize}
\item $A(x)E^{i}_{x}= E^{i}_{f(x)}$;
\item $m_i:=\dim (E^{i}_{x})$ is constant and is said to be the \emph{multiplicity} of $\lambda_i(\mu)$;
\end{itemize}
and 
\begin{itemize}
    \item for every nonzero $v\in E^{i}_{x}$, 
\[\lambda _i (\mu) =\lim _{n\to \pm \infty} \dfrac{1}{n}\log \| A^n(x)v\|.\]
\end{itemize}
We denote by 
\[\gamma _1(\mu)\ge \gamma _2(\mu)\ge \ldots \ge \gamma _d(\mu)\]
the Lyapunov exponents of $(A,f,\mu)$ repeated according to multiplicities. 

\subsection{Main result} 
Recall that if $p\in M$ is a periodic point of period $n$, then
\[\mu_p=\frac{1}{n}\sum_{j=0}^{n-1}\delta_{f^j(p)}\]
is an ergodic $f$-invariant probability measure supported on the orbit of $p$. Then, the main result of this note is the following.

\begin{theorem}\label{theo: main} 
Let $f\colon M\to M $ be a homeomorphism with local product structure exhibiting the periodic closing property, $\mu$ an ergodic $f$-invariant probability measure, and $A\colon M\to GL(d,\R)$ a continuous map admitting a family of continuous invariant stable and unstable holonomies. Then, for any $\theta>0$, there exists a periodic point $p\in M$ such that
\begin{equation}\label{eq: main theo}
	|\gamma _i (\mu)- \gamma _i (\mu_p)|<\theta
\end{equation} 
for every $i=1,\ldots ,d$.
\end{theorem}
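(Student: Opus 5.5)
We reduce the statement to the top exponent and the exponents of exterior powers, and then compare a long orbit segment of a $\mu$-typical point with the orbit of a periodic point obtained from the closing property, using the holonomies to transport the cocycle from one orbit to the other.

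\emph{Reduction.} The sum $\gamma_1(\mu)+\cdots+\gamma_k(\mu)$ is the top exponent of the exterior power cocycle $\wedge^k A$. This cocycle also admits continuous invariant holonomies, namely $\wedge^k H^{s}$ and $\wedge^k H^u$. It therefore suffices to produce a single periodic point $p$ with
\[
\bigl|\chi_k(\mu)-\lim_n \tfrac1n\log\|\wedge^kA^n(p)\|\bigr|<\theta' \quad\text{for all } k=1,\ldots,d,
\]
where $\chi_k(\mu)$ denotes the top exponent of $\wedge^k A$ with respect to $\mu$. We need this for all $k$ simultaneously, and we will get it by running the argument for all $k$ at once.

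\emph{Choice of the typical point and the closing point.} By Kingman's theorem, Egorov's theorem and Oseledets' theorem, we pick a compact set $K$ of positive measure on which the following hold uniformly:
\begin{itemize}
\item[(a)] $\frac1n\log\|\wedge^kA^n(x)\|$ converges to $\chi_k(\mu)$;
\item[(b)] the Oseledets splitting varies continuously;
\item[(c)] the vectors in the top Oseledets space of $\wedge^k A$ (a decomposable direction) grow at the rate $\chi_k$ in the sense of a tempered estimate $\|\wedge^kA^n(x)v\|\ge e^{n(\chi_k-\theta')}\|v\|$ for $n\ge N$, with the corresponding backward estimate.
\end{itemize}
By Poincaré recurrence, there is $x\in K$ with $f^n(x)\in K$, $d(x,f^n(x))<\varepsilon_0$ and $n$ large. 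The periodic closing property then gives a point $p$ of period $n$ shadowing the orbit segment of $x$ within $\delta$.

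\emph{Transport along holonomies.} Let $z=[x,p]$, so that $z\in W^s_\varepsilon(x)\cap W^u_\varepsilon(p)$. By invariance (iii),
\[
A^n(x)=H^s_{f^n z,f^n x}\,A^n(z)\,H^s_{x,z}
\qquad\text{and}\qquad
A^n(z)=H^u_{f^n p,f^n z}\,A^n(p)\,H^u_{z,p}.
\]
Here we use that $f^j z$ stays $\varepsilon$-close to both $f^jx$ and $f^jp$ for $0\le j\le n$. This follows from the shadowing together with expansivity, by a standard local-product argument. Since the holonomies are continuous on a compact domain, they are uniformly bounded, so
\[
A^n(p)=U\,A^n(x)\,V \qquad\text{with}\qquad \|U^{\pm1}\|,\ \|V^{\pm1}\|\le C,
\]
and $C$ does not depend on $n$. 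Crucially, no Hölder distortion estimate is needed: the bounded-distortion role is played entirely by the holonomies. This is why discontinuity of $A$ is harmless.

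\emph{From norms to the spectral radius.} The bound above gives $\frac1n\log\|\wedge^kA^n(p)\|\approx\chi_k(\mu)$ up to $O(\log C/n)$. However, the exponents of $\mu_p$ are governed by the spectral radius $\frac1n\log\rho(\wedge^kA^n(p))$, not by the norm. This is the main obstacle. The plan for it is as follows.
\begin{itemize}
\item Take the top Oseledets direction cone at $x$. By (b), and since $f^n(x)\in K$ is close to $x$, the Oseledets splitting at $f^n(x)$ is close to that at $x$.
\item $A^n(x)$ strongly expands vectors near the top direction relative to the complementary directions, with gap $e^{n(\text{gap})}$. Combined with the bounded distortions $U,V$ and the fact that $f^n x\approx x\approx p$ (so $H^{s}_{x,z}$, $H^u$ and friends are close to the identity when the base points are close, by continuity and (i)), this gives that $\wedge^kA^n(p)$ maps a cone around the pushed top direction strictly into itself, expanding at rate at least $e^{n(\chi_k-2\theta')}/C^2$.
\item Hence $\rho(\wedge^kA^n(p))\ge e^{n(\chi_k-3\theta')}$, while $\rho\le\|\cdot\|$ gives the upper bound.
\end{itemize}

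If exponents coincide (no gap), the cone argument is applied only at indices $k$ where $\gamma_k>\gamma_{k+1}$. The intermediate $\gamma_i$ are then recovered from the sums: since both sequences are ordered, the approximation at the gap indices together with the equality of the exponents within a block pins down each $\gamma_i(\mu_p)$ within $O(\theta')$. Choosing $\theta'$ small relative to $\theta$ and $d$ finishes the proof.
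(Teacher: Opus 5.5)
Your argument is correct in outline. After the holonomy transport, however, it follows a different route from the paper.

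\textbf{Common part.} Both proofs use the holonomies in the same way. They write $A^n(p)=U\,A^n(x)\,V$ with $U,V$ close to $\Id$, and this is why no H\"older distortion estimate is needed. (You bracket with $[x,p]$, the paper with $[p,x]$; this is immaterial.)

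\textbf{Where you diverge: from norms to eigenvalues.} The paper conjugates $A^n(p)$ to $(\Id+R_n)\operatorname{diag}(L_1,\dots,L_\ell)$ in Oseledets coordinates. It then builds an invariant flag by a graph transform and identifies each quotient operator with $(\Id+S_j)L_j$. This places every eigenvalue, with multiplicity, between $(1-\|S_j\|)m(L_j)$ and $(1+\|S_j\|)\|L_j\|$. All exponents come out at once, already ordered, with error $2\theta$.

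You instead pass to exterior powers.
\begin{itemize}
\item The upper bound on every partial sum $\gamma_1(\mu_p)+\dots+\gamma_k(\mu_p)$ comes from $\rho\le\|\cdot\|$.
\item The lower bound is obtained only at gap indices, from an invariant cone around the one-dimensional top Oseledets direction of $\wedge^kA$. Invariance plus expansion by $c$ on the cone gives $\|T^m\|\ge c^m$, hence $\rho(T)\ge c$ by Gelfand's formula.
\end{itemize}

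\textbf{What each route buys.} Your route replaces the flag/quotient machinery with a single dominated-direction argument. The price is that you need Egorov-uniform estimates for every $\wedge^kA$. These must include an upper growth bound on the complementary Oseledets sum, which your item (c) should state explicitly, since cone invariance depends on it. You also need a final bookkeeping step, whose error is of order $d\theta'$.

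\textbf{Points to make explicit.}
\begin{enumerate}
\item \emph{Unstable membership along the orbit.} The claim $f^jz\in W^u_\varepsilon(f^jp)$ for $0\le j\le n$ does not follow from shadowing and the triangle inequality, which only give $d(f^jz,f^jp)\le\varepsilon+\delta$. Expansivity is not the mechanism. You need the argument of the paper's first lemma: for $\delta$ small, uniform continuity of the bracket and $[y,y]=y$ give $f([f^jx,f^jp])=[f^{j+1}x,f^{j+1}p]$.
\item \emph{Recovering individual exponents.} Gap-index approximations plus ordering are not enough. For $d=2$ and $\gamma_1(\mu)=\gamma_2(\mu)=\lambda$, the pair $(\lambda+5,\lambda-5)$ has the right sum and is ordered. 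You must also use the upper bound $\gamma_1(\mu_p)+\dots+\gamma_k(\mu_p)\le\chi_k(\mu)+\theta'$ at every $k$, which you do have. Write the block as $a<i\le b$ with $a,b$ consecutive gap indices. The upper bound at $a+1$, the lower bound at $a$, and ordering give $\gamma_i(\mu_p)\le\lambda_j+4\theta'$ throughout the block. The two-sided estimates at $a$ and $b$ bound the block sum below by $(b-a)\lambda_j-4\theta'$, hence $\gamma_i(\mu_p)\ge\lambda_j-4(b-a)\theta'$.
\item \emph{The index $k=d$.} The lower bound at $k=d$ comes from $|\det A^n(p)|=|\det U|\,|\det A^n(x)|\,|\det V|$, not from a cone.
\end{enumerate}
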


We point out that the previous theorem remains valid even for discontinuous cocycles, provided they admit continuous holonomies on a suitable open set (see Theorem \ref{theo: general} below). We have chosen to first state and prove the continuous version because its statement is more natural and allows us to avoid unnecessary technicalities.

\section{Proof of Theorem \ref{theo: main}}

In this section, we present the proof of Theorem \ref{theo: main}. The idea is to compare the cocycle along a long recurrent orbit of a regular point $x$ with the cocycle along a nearby periodic orbit $p$ obtained from the periodic closing property. Using the invariant stable and unstable holonomies, we construct a linear map that shows that $A^n(p)$ is similar to a small perturbation of $A^n(x)$. After writing this perturbation in coordinates adapted to the Oseledets splitting, we build a nearby invariant flag by means of the graph transform. This allows us to study the induced quotient operators separately. Each quotient operator is a small perturbation of one Oseledets block, so its eigenvalues remain close to the corresponding Lyapunov exponent. Since the quotient operators together account for all eigenvalues, with the correct multiplicities, the Lyapunov exponents of the periodic orbit are close to those of the original ergodic measure.

To simplify notation, since the ergodic measure $\mu$ is fixed for the entire proof, we simply write $\lambda_j$ instead of $\lambda_j(\mu)$. Moreover, in what follows we assume that $\ell \ge 2$, that is, $(A,f,\mu)$ has at least two different Lyapunov exponents. The case when $\ell = 1$ can be treated using a simplified version of the argument presented below. Furthermore, we will use the letter $C$ to denote a positive constant that may differ in each step.

\subsection{Setting up the proof}\label{sec: setting up the proof}
Let $\Delta:=\min_{j=1,\ldots,\ell-1}\{\lambda_j-\lambda_{j+1}\}$ and take $\theta>0$ such that $4\theta<\Delta$. Given $K>0$, let $\Reg ^{K,\theta}$ be the set of all $x\in \Reg$ such that 
\begin{equation}\label{eq: estimate in Reg}
K^{-1}e^{(\lambda_j-\theta)n}\le \|A^n(x)v\|\le Ke^{(\lambda_j+\theta)n} 
\end{equation}
for every $v\in E^j_x$ with $\|v\|=1$ and $n\in \mathbb{N}$. Note that $\mu(\Reg ^{K,\theta})\to 1$ as $K\to +\infty$. In particular, taking $K$ large enough, by Lusin's Theorem, there exists a compact set $\widetilde \Reg ^{K,\theta}\subset \Reg ^{K,\theta}$ with $\mu\left(\widetilde \Reg ^{K,\theta}\right)>0$ such that the Oseledets splitting is continuous when restricted to it.

Let $\varepsilon,\tau>0$ and $\varepsilon_0,\delta >0$ be the constants given by the local product structure and the periodic closing property, respectively, with $\delta <\tau$. By the Poincaré Recurrence Theorem, there exists $x\in \widetilde \Reg ^{K,\theta}$ and an arbitrarily large $n\in \N$ with 
\begin{equation}\label{eq: size of n}
    K^2\le e^{(\Delta -3\theta)n}
\end{equation}
such that $f^n(x)\in \widetilde \Reg ^{K,\theta}$ and $d(x,f^n(x))<\varepsilon_0$.
Then, by the periodic closing property, there exists a periodic point $p\in M$ with $f^n(p)=p$ such that 
 \[d(f^i(x),f^i(p))\le \delta \quad \text{for every } i=0,1,2,\ldots,n. \]
Thus, since $\delta<\tau$, by the local product structure we know that for each $i=0,1,2,\ldots,n$, $W^s_\varepsilon(f^i(p))$ and $W^u_\varepsilon(f^i(x))$ intersect in a unique point which we denote by
\[z_i:=[f^i(p),f^i(x)]=W^s_\varepsilon(f^i(p))\cap W^u_\varepsilon(f^i(x)).\]

\begin{lemma}
For every $\delta=\delta(f,\varepsilon)>0$ small enough, we have that $f(z_i)=z_{i+1}$ for every $i=0,1,2,\ldots,n-1$.
\end{lemma}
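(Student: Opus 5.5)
The plan is to show that $f(z_i)$ lies in both $W^s_\varepsilon(f^{i+1}(p))$ and $W^u_\varepsilon(f^{i+1}(x))$. Since $d(f^{i+1}(p),f^{i+1}(x))\le\delta<\tau$, the local product structure says this intersection is the single point $z_{i+1}$, so $f(z_i)=z_{i+1}$ follows at once.

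\emph{Stable side.} Since $z_i\in W^s_\varepsilon(f^i(p))$, we have $d(f^{k}(z_i),f^{k+i}(p))\le\varepsilon$ for all $k\ge 0$. Shifting $k$ by one gives $d(f^{k}(f(z_i)),f^{k}(f^{i+1}(p)))\le\varepsilon$ for all $k\ge0$. Hence $f(z_i)\in W^s_\varepsilon(f^{i+1}(p))$ with no loss; forward invariance of stable sets is automatic.

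\emph{Unstable side.} Here the difficulty is the $k=1$ step. Since $z_i\in W^u_\varepsilon(f^i(x))$, we get $d(f^{k}(f(z_i)),f^{k}(f^{i+1}(x)))\le\varepsilon$ for all $k\le -1$. It remains to check $k=0$, that is, $d(f(z_i),f^{i+1}(x))\le\varepsilon$. For this we use smallness of $\delta$ together with continuity of the bracket and uniform continuity of $f$.
\begin{itemize}
\item By uniform continuity of $f$ on the compact space $M$, choose $\eta>0$ such that $d(a,b)\le\eta$ implies $d(f(a),f(b))\le\varepsilon$.
\item Since $[x,x]=x$ and the bracket is continuous on the compact set $\{(a,b):d(a,b)\le\tau\}$, it is uniformly continuous there. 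So there is $\delta_1>0$ such that $d(a,b)\le\delta_1$ implies $d([a,b],b)\le\eta$. Compare $[a,b]$ with $[b,b]=b$ to see this.
\item Take $\delta\le\min\{\delta_1,\tau\}$. Then $d(z_i,f^i(x))\le\eta$, hence $d(f(z_i),f^{i+1}(x))\le\varepsilon$.
\end{itemize}
This gives $f(z_i)\in W^u_\varepsilon(f^{i+1}(x))$.

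I expect this continuity estimate at $k=0$ to be the only real obstacle. The resulting $\delta$ depends only on $f$ and $\varepsilon$, as the statement requires, and the same choice works for every $i$ simultaneously.
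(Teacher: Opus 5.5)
Your proof is correct and follows the same route as the paper. Forward invariance gives the stable inclusion for free. The only nontrivial point, $d(f(z_i),f^{i+1}(x))\le\varepsilon$, comes from uniform continuity of $f$ together with continuity of the bracket and $[b,b]=b$, and uniqueness of the intersection then yields $f(z_i)=z_{i+1}$. Your appeal to joint uniform continuity of $[\cdot,\cdot]$ on the compact set $\{(a,b): d(a,b)\le\tau\}$ is slightly cleaner than the paper's use of the maps $y\mapsto[y,f^i(x)]$. It makes explicit that $\delta$ is independent of $x$, $i$ and $n$, which matters because $x$ and $n$ are chosen only after $\delta$.
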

\begin{proof}
From the definition of the stable set and the fact that $z_i\in W^s_\varepsilon(f^i(p))$, we have $f(z_i)\in W^s_\varepsilon(f^{i+1}(p))$ for all $i=0,1,2,\ldots,n-1$. Now, from $z_i \in W^u_\varepsilon(f^{i}(x))$, we obtain $d(f^n(f^{i+1}(x)),f^n(f(z_i)))\leq \varepsilon$ for all $n\leq -1$. Hence, in order to conclude that $f(z_i)\in W^u_\varepsilon(f^{i+1}(x))$, we must only show that the last inequality also holds for $n=0$, that is, $d(f^{i+1}(x),f(z_i))\leq \varepsilon$.

By the uniform continuity of $f$, let $\eta>0$ be such that $d(y_1,y_2)<\eta$ implies $d(f(y_1),f(y_2))<\varepsilon$ for every $y_1,y_2\in M$. Because the map $y\mapsto[y,f^i(x)]$ is uniformly continuous for every $i=0,1,2,\ldots,n-1$, there exists $\delta>0$ such that $d(f^i(x),z_i)=d([f^i(x),f^i(x)],[f^i(p),f^i(x)])<\eta$ whenever $d(f^i(x),f^i(p))<\delta$. Thus, $d(f^{i+1}(x),f(z_i))<\varepsilon$, as desired.
\end{proof}

\subsection{Transition operator} 
We now define a transition operator between the fibers over the orbit of $x$ and the periodic orbit $p$. More precisely, for each $i=0,1,2,\ldots,n$, we define the map $H_{f^i(x) ,f^i(p)}\colon \R^d \to \R^d$ by 
\[H_{f^i(x), f^i(p)} := H^s_{z_i ,f^i(p)}  H^u_{f^i(x) , z_i}.\]

\begin{lemma}\label{lem: equivariance transition map}
For every $i=0,1,2,\ldots,n-1$,
    \[H_{f^{i+1}(x), f^{i+1}(p)}  A(f^i(x)) = A(f^i(p))H_{f^{i}(x), f^{i}(p)} .\]
In particular,
 \begin{equation}\label{eq: aux lemma}
      A^n(p)=H_{f^n(x),p}A^n(x)H_{x,p}^{-1}.
 \end{equation}
\end{lemma}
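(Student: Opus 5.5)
The plan is to expand the definition of the transition operator and apply the invariance property iii) once for the unstable holonomy and once for the stable holonomy. The previous lemma will let the two applications fit together.

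Fix $0\le i\le n-1$. By definition,
\[
H_{f^{i+1}(x),f^{i+1}(p)}A(f^i(x)) = H^s_{z_{i+1},f^{i+1}(p)}\,H^u_{f^{i+1}(x),z_{i+1}}\,A(f^i(x)).
\]
By the previous lemma, $z_{i+1}=f(z_i)$, so the unstable holonomy factor is $H^u_{f(f^i(x)),f(z_i)}$. Since $z_i\in W^u_\varepsilon(f^i(x))$, it is defined, and property iii) for $H^u$ gives
\[
H^u_{f(f^i(x)),f(z_i)}A(f^i(x)) = A(z_i)\,H^u_{f^i(x),z_i}.
\]
Next, $z_i\in W^s_\varepsilon(f^i(p))$, so symmetrically $f^i(p)\in W^s_\varepsilon(z_i)$ and $H^s_{z_i,f^i(p)}$ is defined; the paper already uses this implicitly when defining $H$. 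Since $f(z_i)=z_{i+1}$, property iii) for $H^s$ gives
\[
H^s_{z_{i+1},f^{i+1}(p)}A(z_i)=A(f^i(p))\,H^s_{z_i,f^i(p)}.
\]
Combining the two displays yields $A(f^i(p))H^s_{z_i,f^i(p)}H^u_{f^i(x),z_i}=A(f^i(p))H_{f^i(x),f^i(p)}$, which is the equivariance relation.

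For \eqref{eq: aux lemma}, I will iterate the relation by induction on $k$ to obtain
\[
H_{f^k(x),f^k(p)}A^k(x)=A^k(p)H_{x,p}, \qquad 0\le k\le n.
\]
The inductive step multiplies on the left by $A(f^k(\cdot))$ and applies the relation at index $k$. Taking $k=n$ and using $f^n(p)=p$ gives $H_{f^n(x),p}A^n(x)=A^n(p)H_{x,p}$. Since the holonomies take values in $GL(d,\mathbb R)$, $H_{x,p}$ is invertible, and multiplying on the right by $H_{x,p}^{-1}$ gives the claim.

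The only delicate point is bookkeeping. First, the stable and unstable sets must be used in the right direction: the relation $y\in W^s_\varepsilon(x)$ is symmetric, so the holonomy domains match. Second, $f(z_i)$ must coincide with $z_{i+1}$, which is exactly what the preceding lemma supplies when $\delta$ is small. Beyond this I expect no real obstacle.
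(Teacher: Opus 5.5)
Your proof is correct and matches the paper's argument: you expand the transition operator, use $f(z_i)=z_{i+1}$ to apply invariance iii) first to the unstable and then to the stable holonomy, and then iterate by induction using $f^n(p)=p$. Your remarks on the symmetry of $W^s_\varepsilon$ and on the invertibility of $H_{x,p}$ only make explicit what the paper leaves implicit.
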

\begin{proof}
Using the invariance condition (iii) from the definition of holonomies and the fact that $f(z_i)=z_{i+1}$, we obtain, for any $i=0,1,\ldots,n-1$,
\[ \begin{split}
	H_{f^{i+1}(x),f^{i+1}(p)}A(f^{i}(x)) &= H_{f(z_i),f^{i+1}(p)}^{s}H_{f^{i+1}(x),f(z_i)}^{u}A(f^{i}(x)) \\
	&= H_{f(z_i),f^{i+1}(p)}^{s}A(z_i)H_{f^{i}(x),z_i}^{u} \\
	&= A(f^{i}(p))H_{z_i,f^{i}(p)}^{s}H_{f^{i}(x),z_i}^{u} \\
	&= A(f^i(p))H_{f^{i}(x),f^{i}(p)}.
\end{split} \] 
Finally, \eqref{eq: aux lemma} follows directly from the above identity via induction.
\end{proof}

By Lemma \ref{lem: equivariance transition map},
\[A^n(p)=H_{x,p} \left( H_{x,p}^{-1}H_{f^n(x),p}A^n(x) \right) H_{x,p}^{-1}.\]
Thus, since
\[\begin{split}
    H_{x,p}^{-1}H_{f^n(x),p}&= H^u_{z_0, x}  H^s_{p , z_0}  H^s_{z_n , p}  H^u_{f^n(x) , z_n},
\end{split}\]
defining
\begin{equation}\label{eq: def H and Bn}
H_{f^n(x),x}:=H^u_{z_0, x}  H^s_{p , z_0}  H^s_{z_n , p}  H^u_{f^n(x) , z_n} \quad \text{ and }\quad B_n(x):=H_{f^n(x),x}A^n(x),
\end{equation}
we have that
\begin{equation}\label{eq: relat Anp Bnx}
A^n(p)=H_{x,p} B_n(x) H_{x,p}^{-1}.
\end{equation}
That is, the matrices $A^n(p)$ and $B_n(x)$ are similar. In particular,
\[\text{eigenvalues}\left(A^n(p)\right)=\text{eigenvalues}\left(B_n(x)\right).\]
Moreover, observe that $d(f^n(x),x)\leq 2\delta$. Thus, recalling \eqref{eq: def H and Bn}, since the distances between $f^i(x), f^i(p)$ and $z_i$ go uniformly to zero as $\delta\to 0$, $(z,y)\mapsto H^\ast_{z,y}$ is uniformly continuous, and $H^\ast_{x,x}=\Id$ for $\ast=s,u$, we have that 
\begin{equation}\label{eq: Hfnx x close to Id}
    \|H_{f^n(x),x}-\Id\|<\omega_H(\delta)
\end{equation}
where $\omega_H$ is an increasing function satisfying $\omega_H(\delta)\to 0$ as $\delta\to 0$.

\subsection{Block diagonalization and perturbation bounds} \label{sec: block diag}
In this section, we construct a continuous change of coordinates using the Oseledets splitting so that, in this new coordinate system, $A^n(x)$ is a block-diagonal matrix. Then, we construct a matrix which is similar to $A^n(p)$ and is close to the block-diagonal matrix previously obtained.

Fix a recurrent point $x\in\widetilde \Reg^{K,\theta}$ and the corresponding periodic point $p$ of period $n$ obtained in the previous subsection. Over the invariant set $\Reg$, the Oseledets splitting is well-defined. Therefore, we can introduce a measurable family of linear isomorphisms 
\[
\Psi_y\colon \R^d\to \R^d, \qquad y\in \Reg,
\]
such that
\[
\Psi_y(\R^{m_j})=E_y^j
\]
for every $j$, where $m_j=\dim E_y^j$. Here we think of $\R ^d$ as $\R^d=\R^{m_1}\oplus \R^{m_2}\oplus \ldots\oplus \R^{m_\ell}$ and each $\R^{m_j}$ as a subspace of $\R^d$. Since the Oseledets splitting is continuous on the compact subset $\widetilde \Reg ^{K,\theta}$, we can construct this family such that the restriction $y \to \Psi_y$ is continuous on $\widetilde \Reg ^{K,\theta}$. By compactness,
\begin{equation}\label{eq: bound on Psi}
\sup_{y\in\widetilde \Reg ^{K,\theta}} \max\{\|\Psi_y\|,\|\Psi_y^{-1}\|\} =:C_0<\infty .
\end{equation}

 For $y \in \Reg$, define $\widehat A(y)=\Psi_{f(y)}^{-1}A(y)\Psi_y$. Then, evaluating this along the orbit of $x$ gives
\[
\widehat A^n(x)= \Psi_{f^n(x)}^{-1}A^n(x)\Psi_x = \operatorname{diag}(L_1,\ldots,L_\ell),
\]
where each block $L_j$ is an $m_j\times m_j$ matrix. By \eqref{eq: estimate in Reg} and \eqref{eq: bound on Psi} we get that
\begin{equation}\label{eq:block-estimates}
(C_0^2K)^{-1}e^{(\lambda_j-\theta)n} \le m(L_j) \le \|L_j\| \le C_0^2K\,e^{(\lambda_j+\theta)n}
\end{equation}
where $m(L_j)$ denotes the \emph{conorm} of the matrix $L_j$.

Set $\widehat B_n(x) = \Psi_x^{-1}B_n(x)\Psi_x$. Thus, since $B_n(x)=H_{f^n(x),x}A^n(x)$ and $\widehat A^n(x) = \Psi_{f^n(x)}^{-1}A^n(x)\Psi_x$, we can write
\[ \widehat B_n(x) = \Psi_x^{-1}H_{f^n(x),x}\Psi_{f^n(x)}\widehat A^n(x) = (\Id+R_n)\widehat A^n(x), \]
where $\Id + R_n = \Psi_x^{-1}H_{f^n(x),x}\Psi_{f^n(x)}$. We will now estimate the size of $\|R_n\|$. For this purpose, we rewrite it as
\[ R_n = \Psi_x^{-1} \left[ (H_{f^n(x),x} - \Id)\Psi_{f^n(x)} + (\Psi_{f^n(x)} - \Psi_x) \right]. \]
Thus, since $\widetilde \Reg ^{K,\theta}$ is compact and $\widetilde \Reg ^{K,\theta} \ni  y\to \Psi_y$ is continuous, 
\[\|\Psi_w-\Psi_y\|\leq \omega_\Psi(d(w,y))\]
where $\omega_\Psi$ is an increasing function satisfying $\omega_\Psi(\delta)\to 0$ as $\delta\to 0$. Moreover, as
\[d(x,f^n(x))\leq d(x,p)+d(f^n(p),f^n(x))\leq 2\delta,\]
we get that $\|\Psi_{f^n(x)} - \Psi_x\| \le \omega_\Psi(2\delta)$. Combining this fact with \eqref{eq: Hfnx x close to Id} and \eqref{eq: bound on Psi} we get that
\begin{equation}\label{eq: estimate Rn}
\begin{aligned}
\|R_n\| &\le \|\Psi_x^{-1}\| \left( \|H_{f^n(x), x} - \Id\| \|\Psi_{f^n(x)}\| + \|\Psi_{f^n(x)} - \Psi_x\| \right) \\
&\le C_0 \left( \omega_H(\delta) C_0 + \omega_\Psi(2\delta) \right) = C_0^2 \omega_H(\delta) + C_0 \omega_\Psi(2\delta). 
\end{aligned}
\end{equation}
In particular, by choosing $\delta$ sufficiently small, we can make $\|R_n\|$ arbitrarily small.

Finally, we note that, since $A^n(p)$ is similar to $B_n(x)$ (recall \eqref{eq: relat Anp Bnx}) and $B_n(x)$ is similar to $\widehat B_n(x)$, the matrices $A^n(p)$ and $\widehat B_n(x)$ have the same eigenvalues.

\subsection{Invariant flag and quotient dynamics}
In this section, we establish the existence of a nested sequence of $\widehat B_n(x)$-invariant subspaces forming a flag via a graph transform argument. We then analyze the induced quotient dynamics on these nested spaces, conjugating them to explicit linear operators on $\mathbb R^{m_j}$ to derive norm estimates on their perturbation terms.

Let
\[
U_j=\R^{m_1}\oplus\cdots\oplus\R^{m_j}, \qquad V_j=\R^{m_{j+1}}\oplus\cdots\oplus\R^{m_\ell},
\]
so that $\R^d=U_j\oplus V_j$. Relative to this decomposition, we write
\[
\widehat A^n(x)= 
\begin{pmatrix} 
L_{\le j} & 0\\ 
0 & L_{>j} 
\end{pmatrix},
\]
where $L_{\le j}=\operatorname{diag}(L_1,\ldots,L_j)$ and $L_{>j}=\operatorname{diag}(L_{j+1},\ldots,L_\ell)$ and
\[
R_n= 
\begin{pmatrix} 
R_{11} & R_{12}\\ 
R_{21} & R_{22} 
\end{pmatrix},
\]
so that $\widehat B_n(x) = (\Id+R_n) \operatorname{diag}(L_{\le j}, L_{>j})$. Noting that $L_{>j}$ and $L_{\le j}^{-1}$ act on separate coordinate spaces ($V_j$ and $U_j$, respectively), we may bound the norms of the individual blocks using \eqref{eq:block-estimates} by
\begin{equation}\label{eq: est L>jL<j}
\|L_{>j}\| \|L_{\le j}^{-1}\| \le C_0^4K^2 e^{-(\lambda_j-\lambda_{j+1}-2\theta)n} \le C_0^4e^{-\theta n},
\end{equation}
where the last inequality follows from the choice of the return time $n$ in \eqref{eq: size of n}. In particular, $\|L_{>j}\| \|L_{\le j}^{-1}\| \to 0$ as $n\to\infty$ for $j=1,\ldots,\ell $. Consequently, combining this fact with \eqref{eq: estimate Rn}, it follows that after fixing $\delta>0$ sufficiently small and taking the recurrence time $n$ sufficiently large, we may assume without loss of generality that $\|R_n\|+\|L_{>j}\| \|L_{\le j}^{-1}\|$ is as small as we need. This will be used extensively in the sequel.

\begin{lemma}\label{lem:graph-transform}
Assume that $\|R_n\|+\|L_{>j}\| \|L_{\le j}^{-1}\|$ is sufficiently small. Then, for every $j=1,\ldots,\ell-1$, there exists a unique linear map $P_j\colon U_j\to V_j$ with $\|P_j\|\leq1$ such that $F_j=\operatorname{graph}(P_j)$ is invariant under $\widehat B_n(x)$. Moreover, there exists a constant $C>0$, independent of $n$, such that $\|P_j\|\le C\|R_n\|.$
\end{lemma}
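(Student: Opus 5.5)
We will prove Lemma~\ref{lem:graph-transform} with a graph transform argument on the space of linear maps $\mathcal P:=\{P\colon U_j\to V_j \text{ linear}, \|P\|\le 1\}$, equipped with the operator norm (a complete metric space). Write $\widehat B_n(x)$ in block form relative to $U_j\oplus V_j$:
\[
\widehat B_n(x)=\begin{pmatrix} (\Id+R_{11})L_{\le j} & R_{12}L_{>j}\\ R_{21}L_{\le j} & (\Id+R_{22})L_{>j}\end{pmatrix}.
\]
The graph of $P$ is invariant exactly when, for every $u\in U_j$, the image of $(u,Pu)$ lies again in the graph. This means
\[
R_{21}L_{\le j}u+(\Id+R_{22})L_{>j}Pu = P\big[(\Id+R_{11})L_{\le j}u+R_{12}L_{>j}Pu\big].
\]
Setting $w=L_{\le j}u$, so that $u=L_{\le j}^{-1}w$, this becomes the fixed point equation
\[
P=\Gamma(P):=\big[R_{21}+(\Id+R_{22})L_{>j}PL_{\le j}^{-1}\big]\big[\Id+R_{11}+R_{12}L_{>j}PL_{\le j}^{-1}\big]^{-1}.
\]
Set $\kappa:=\|L_{>j}\|\|L_{\le j}^{-1}\|$ and $r:=\|R_n\|$. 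If $r+\kappa$ is small, say at most $1/4$, then for $P\in\mathcal P$ the second bracket is within $r+r\kappa\le 1/2$ of the identity. It is therefore invertible by a Neumann series, with inverse of norm at most $2$. This makes $\Gamma$ well defined on $\mathcal P$.

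The plan has three steps, taken in order.

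\emph{Step 1: $\Gamma$ maps $\mathcal P$ into itself.} We have
\[
\|\Gamma(P)\|\le 2\big(r+(1+r)\kappa\|P\|\big)\le 2(r+2\kappa),
\]
which is at most $1$ once $r+\kappa$ is small.

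\emph{Step 2: $\Gamma$ is a contraction.} Write $\Gamma(P)=N(P)D(P)^{-1}$, where $N$ and $D$ are affine in $P$. Then
\[
\Gamma(P)-\Gamma(Q)=\big(N(P)-N(Q)\big)D(P)^{-1}+N(Q)\big(D(P)^{-1}-D(Q)^{-1}\big),
\]
and
\[
D(P)^{-1}-D(Q)^{-1}=D(P)^{-1}\big(D(Q)-D(P)\big)D(Q)^{-1}.
\]
We also have $\|N(P)-N(Q)\|\le(1+r)\kappa\|P-Q\|$ and $\|D(P)-D(Q)\|\le r\kappa\|P-Q\|$. Together these give a Lipschitz constant of order $\kappa$, for example $\le 2(1+r)\kappa+4r\kappa(r+2\kappa)$. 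This is less than $1/2$ when $r+\kappa$ is small. The Banach fixed point theorem then gives a unique fixed point $P_j\in\mathcal P$. Uniqueness holds among maps with $\|P\|\le 1$, because any invariant graph with $\|P\|\le1$ is a fixed point of $\Gamma$. This step needs the invertibility of $D(P)$, which is guaranteed by the smallness assumption, since $D(P)$ maps $U_j$ onto itself.

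\emph{Step 3: the bound $\|P_j\|\le C\|R_n\|$.} Apply the estimate from Step 1 at the fixed point:
\[
\|P_j\|\le 2r+2(1+r)\kappa\|P_j\|.
\]
If $2(1+r)\kappa\le 1/2$, absorbing the last term gives $\|P_j\|\le 4r$. So we can take $C=4$, which is independent of $n$.

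The main obstacle is mild. One must make sure all the smallness requirements depend only on $r+\kappa$, and not on $n$ or $K$ separately. This holds because $\kappa$ enters only through the products $L_{>j}PL_{\le j}^{-1}$, and these are bounded by $\kappa\|P\|$. The rescaling $w=L_{\le j}u$ is the key trick: it avoids ever estimating $\|L_{\le j}\|$ and $\|L_{>j}\|$ individually, and these may be exponentially large or small in $n$.
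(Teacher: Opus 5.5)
Your proposal is correct and takes essentially the same route as the paper. It uses the same fixed-point equation obtained by right-multiplying by $L_{\le j}^{-1}$, Neumann-series invertibility of the second factor, a Lipschitz constant of order $\|L_{>j}\|\|L_{\le j}^{-1}\|$ from the same identity for $\Gamma(P)-\Gamma(Q)$, and Banach's fixed point theorem; your bound $\|P_j\|\le 4\|R_n\|$, obtained by absorbing the $\kappa\|P_j\|$ term, is equivalent to the paper's $\|P_j\|\le 2\|\mathcal G_j(0)\|\le 4\|R_{21}\|$.
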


\begin{proof}
In what follows, we write $\widehat B_n(x)=(\Id+R_n) \operatorname{diag}(L_{\le j}, L_{>j})$ relative to the decomposition $\R^d=U_j\oplus V_j$. Given a linear operator $P\colon U_j\to V_j$, a vector in $\operatorname{graph}(P)$ has the form $(u,Pu)$ with $u\in U_j$. Thus, its image under $\widehat B_n(x)$ is $(u',v')\in U_j\oplus V_j$ with
\[
u' = (\Id+R_{11})L_{\le j}u + R_{12}L_{>j}Pu \;\text{ and }\; v' = R_{21}L_{\le j}u + (\Id+R_{22})L_{>j}Pu
\]
where $\Id$ denotes the identity operator on the respective space. Hence, $\operatorname{graph}(P)$ is invariant under $\widehat B_n(x)$ if and only if $v'=Pu'$. Since this identity must hold for every $u\in U_j$, we get that
\[
P(\Id+R_{11})L_{\le j} + PR_{12}L_{>j}P = R_{21}L_{\le j} + (\Id+R_{22})L_{>j}P.
\]
Multiplying both sides of this equation on the right by $L_{\le j}^{-1}$ yields the equivalent fixed-point equation $P=\mathcal G_j(P)$, where
\[
\mathcal G_j(P) = \left( R_{21} + (\Id+R_{22}) L_{>j}PL_{\le j}^{-1} \right) \left( \Id + R_{11} + R_{12}L_{>j}PL_{\le j}^{-1} \right)^{-1}.
\]
To simplify notation, let $X(P) = R_{21} + (\Id+R_{22}) L_{>j}PL_{\le j}^{-1}$ and $Y(P) = \Id + R_{11} + R_{12}L_{>j}PL_{\le j}^{-1}$. 

We will now show that $\mathcal G_j$ is well-defined and, moreover, is a contraction on the closed ball $\mathcal B = \{P \in \mathcal{L}(U_j, V_j) : \|P\| \le 1\}$ where $\mathcal{L}(U_j, V_j)$ denotes the space of linear operators from $U_j$ to $V_j$. In fact, since $\|R_n\|$ and $\|L_{>j}\| \|L_{\le j}^{-1}\|$ can be taken arbitrarily small, for any $P \in \mathcal B$, the operator $Y(P)$ is invertible via the Neumann series and $\|Y(P)^{-1}\| \le 2$ and $\|X(P)\| \le 1$. In particular, $\mathcal{G}_j$ is well-defined. Moreover, for any two linear maps $P_1, P_2 \in \mathcal B$, we have that
\[
\mathcal{G}_j(P_1) - \mathcal{G}_j(P_2) = (X(P_1) - X(P_2))Y(P_1)^{-1} + X(P_2) Y(P_1)^{-1} (Y(P_2) - Y(P_1)) Y(P_2)^{-1}.
\]
Now,
\[
X(P_1) - X(P_2) = (\Id+R_{22})L_{>j}(P_1 - P_2)L_{\le j}^{-1}
\]
and
\[
Y(P_2) - Y(P_1) = R_{12}L_{>j}(P_2 - P_1)L_{\le j}^{-1}.
\]
Thus, taking norms we get that
\[
\|\mathcal G_j(P_1)-\mathcal G_j(P_2)\| \le C_1 \|L_{>j}\| \|L_{\le j}^{-1}\| \|P_1-P_2\|,
\]
where $C_1$ is independent of $n$. By choosing the recurrence time $n$ sufficiently large so that $C_1\|L_{>j}\| \|L_{\le j}^{-1}\| \le \frac{1}{2}$, it follows that
\[
\|\mathcal G_j(P_1)-\mathcal G_j(P_2)\| \le \frac{1}{2}\|P_1-P_2\|.
\]
Furthermore, since $\|\mathcal{G}_j(0)\| = \|R_{21}(\Id+R_{11})^{-1}\| \le 2 \|R_n\|\leq 1/2$ for $\delta$ small, we get that for any $P \in \mathcal B$, 
\[
\|\mathcal{G}_j(P)\| \le \|\mathcal{G}_j(0)\| + \frac{1}{2}\|P\| \le \frac{1}{2} + \frac{1}{2} = 1,
\]
which proves $\mathcal{G}_j(\mathcal B) \subset \mathcal B$. Thus, by the Banach Fixed Point Theorem, there exists a unique $P_j \in \mathcal B$ such that $\mathcal G _j(P_j)=P_j$. Finally, the contractive bound for $\mathcal G_j$ yields $\|P_j\| \le 2\|\mathcal{G}_j(0)\|  \le 4\|R_{21}\| \le C\|R_n\|$, where $C$ is independent of $n$, concluding the proof of the lemma.
\end{proof}

Recall that $\widehat A^n(x)$ is a block-diagonal matrix. On the other hand, $\widehat B_n(x)$, which is our matrix of interest, is not due to the perturbation coming from $R_n$. Nevertheless, we show in the next lemma that $\widehat B_n(x)$ preserves a flag, replacing the block decomposition.

\begin{lemma}\label{lem:invariant-flag}
Consider $\widehat B_n(x)=(\Id+R_n)\operatorname{diag}(L_1,\ldots,L_\ell)$ with $\|R_n\|+\|L_{>j}\| \|L_{\le j}^{-1}\|$ sufficiently small so that Lemma \ref{lem:graph-transform} holds for $j=1,2,\ldots,\ell-1$. Then the invariant graphs $F_j=\operatorname{graph}(P_j)$ given by Lemma \ref{lem:graph-transform} satisfy
\[
F_0\subset F_1\subset\cdots\subset F_\ell=\mathbb R^d,
\]
where $F_0=\{0\}$. Consequently, they form a $\widehat B_n(x)$-invariant flag.
\end{lemma}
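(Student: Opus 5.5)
The plan is to use a spectral separation argument rather than compare the fixed-point equations for $P_{j-1}$ and $P_j$ directly. Fix $j\in\{1,\ldots,\ell-1\}$; I will show $F_{j-1}\subset F_j$, the case $j=\ell$ being trivial. Both $F_{j-1}$ and $F_j$ are $\widehat B_n(x)$-invariant by Lemma \ref{lem:graph-transform}, with $F_0=\{0\}$ trivially invariant. So $\widehat B_n(x)$ induces a linear map $Q_j$ on the quotient $\R^d/F_j$. The image $\pi_j(F_{j-1})$ of $F_{j-1}$ under the projection $\pi_j\colon\R^d\to\R^d/F_j$ is $Q_j$-invariant, and $Q_j|_{\pi_j(F_{j-1})}$ is a quotient of $\widehat B_n(x)|_{F_{j-1}}$. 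Hence every eigenvalue of $Q_j|_{\pi_j(F_{j-1})}$ is an eigenvalue of $\widehat B_n(x)|_{F_{j-1}}$ and also of $Q_j$. It therefore suffices to show that these two sets of eigenvalues are disjoint. Then $\pi_j(F_{j-1})=\{0\}$, that is, $F_{j-1}\subset F_j$.

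\textbf{Lower bound on $F_{j-1}$.} Parametrize $F_{j-1}$ by $u\mapsto(u,P_{j-1}u)$, $u\in U_{j-1}$, using $\R^d=U_{j-1}\oplus V_{j-1}$. The computation in the proof of Lemma \ref{lem:graph-transform} shows that $\widehat B_n(x)|_{F_{j-1}}$ is conjugate to the operator $u\mapsto Y(P_{j-1})L_{\le j-1}u$ on $U_{j-1}$, since $u'=Y(P_{j-1})L_{\le j-1}u$. We have $\|Y(P_{j-1})^{-1}\|\le 2$. Hence every eigenvalue $\alpha$ of $\widehat B_n(x)|_{F_{j-1}}$ satisfies
\[
|\alpha|\ge \tfrac12 m(L_{\le j-1})\ge \tfrac12 m(L_{\le j}),
\]
using that $m(L_{\le j-1})=\min_{i\le j-1}m(L_i)\ge m(L_{\le j})$.

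\textbf{Upper bound on the quotient.} Identify $\R^d/F_j$ with $V_j$ via $(u,v)\mapsto v-P_ju$, whose kernel is exactly $F_j$. Let $(u',v')$ be the image of $(u,v)$ under $\widehat B_n(x)$, in the decomposition $U_j\oplus V_j$. A direct computation with the block form $(\Id+R_n)\operatorname{diag}(L_{\le j},L_{>j})$ gives, for the representative $(0,v)$,
\[
v'-P_ju'=\bigl((\Id+R_{22})-P_jR_{12}\bigr)L_{>j}v.
\]
So $Q_j$ is conjugate to $\bigl((\Id+R_{22})-P_jR_{12}\bigr)L_{>j}$, whose norm is at most $2\|L_{>j}\|$ when $\|R_n\|$ is small and $\|P_j\|\le 1$. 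Every eigenvalue $\beta$ of $Q_j$ therefore satisfies $|\beta|\le 2\|L_{>j}\|$.

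\textbf{Conclusion and main obstacle.} By \eqref{eq: est L>jL<j}, $\|L_{>j}\|\,\|L_{\le j}^{-1}\|$ is as small as we like, and $m(L_{\le j})=\|L_{\le j}^{-1}\|^{-1}$. So for $n$ large, $2\|L_{>j}\|<\tfrac12 m(L_{\le j})$, and the two spectra are disjoint. This gives $\pi_j(F_{j-1})=0$. The step needing the most care is the quotient computation: one must choose the representative $(0,v)$ and check that the $U_j$-component $u'$ contributes only through $P_ju'$, so that $L_{\le j}$ drops out entirely and only $L_{>j}$ remains. If that bookkeeping proved troublesome, my fallback would be to construct, via the analogous graph transform for $\widehat B_n(x)^{-1}$, a complementary invariant graph $G_j$ over $V_j$. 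I would then argue with generalized eigenspaces: an invariant subspace splits along $F_j\oplus G_j$, and $F_{j-1}\cap G_j=\{0\}$ by transversality of the two small graphs.
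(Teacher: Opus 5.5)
Your argument is correct, and it takes a different route from the paper's.

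\textbf{The paper's route.} It proves $F_{j-1}\subset F_j$ by a nested graph transform:
\begin{enumerate}
\item It writes $\widehat B_n(x)|_{F_j}$ in the chart $\Gamma_j(u)=(u,P_ju)$ as $A_j=(\Id+S^{(j)})L_{\le j}$ on $U_j$.
\item It applies Lemma~\ref{lem:graph-transform} a second time, to $A_j$ with respect to $U_j=U_{j-1}\oplus\R^{m_j}$, to get an invariant graph $G_{j-1}=\operatorname{graph}(Q)$.
\item It lifts this to the invariant subspace $\Gamma_j(G_{j-1})\subset F_j$ and checks that it is a graph over $U_{j-1}$ of norm at most $1$.
\item It concludes from uniqueness of the fixed point of $\mathcal G_{j-1}$ that this subspace equals $F_{j-1}$.
\end{enumerate}

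\textbf{Your route.} You replace all of this by a spectral gap. Eigenvalues of $\widehat B_n(x)|_{F_{j-1}}$ have modulus at least $\tfrac12 m(L_{\le j})$. Eigenvalues of the induced map on $\R^d/F_j$ have modulus at most $2\|L_{>j}\|$. The image of $F_{j-1}$ in $\R^d/F_j$ carries an operator whose spectrum lies in both sets, so that image must be zero. Your computations are right: the representative $(0,v)$ suffices because $(u,v)-(0,v-P_ju)\in F_j$ and $F_j$ is invariant. The extra requirement $4\|L_{>j}\|\,\|L_{\le j}^{-1}\|<1$ is the same kind of smallness the paper already imposes, so your fallback is not needed.

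\textbf{What each buys.} Your version is shorter. It uses only invariance and the bounds $\|P\|\le 1$, $\|Y(P)^{-1}\|\le 2$ from the graph transform, with no uniqueness argument. The paper's version, in exchange, produces $A_j$, $G_{j-1}$ and the estimate $\|Q\|\le C\|R_n\|$. It reuses these in Lemma~\ref{lem:quotient-representation} to write the quotient operator on $F_j/F_{j-1}$ as $(\Id+S_j)L_j$.

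Your estimates could in fact replace that step too. $F_j/F_{j-1}$ is a quotient of $F_j$ and an invariant subspace of $\R^d/F_{j-1}$. So its eigenvalues satisfy $\tfrac12 m(L_{\le j})\le|\rho|\le 2\|L_{>j-1}\|$. Together with the block estimates, this places $\tfrac1n\log|\rho|$ within $\theta+O(1/n)$ of $\lambda_j$.
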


\begin{proof}
Fix $j\in \{2,\ldots,\ell-1\}$ and let $F_j=\operatorname{graph}(P_j)$ be the invariant graph over $U_j$ given by Lemma \ref{lem:graph-transform}. We will prove that $F_{j-1}\subset F_j$.  Let $\pi_j \colon \mathbb R^d \to U_j$ be the standard coordinate projection and $\Gamma_j \colon U_j \to F_j$ be the graphing isomorphism given by $\Gamma_j(u) = (u, P_j(u))$, so that the restriction $\pi_j|_{F_j} \colon F_j \to U_j$ acts as its inverse. Using these maps, let us define the operator $A_j \colon  U_j \to U_j$ by
\[
A_j := \pi_j|_{F_j} \circ \widehat B_n (x)\circ \Gamma_j.
\]
Note that we have not explicitly stated the dependency of $A_j$ on $x$ and $n$. This shall cause no confusion. Thus, writing $\widehat B_n(x)=(\Id+R_n) \operatorname{diag}(L_{\le j}, L_{>j})$ relative to $\R^d = U_j \oplus V_j$,  we get that for any $u\in U_j$, 
\[
\begin{split}
A_j(u) &= \pi_j \left( \begin{pmatrix} \Id + R_{11} & R_{12} \\ R_{21} & \Id + R_{22} \end{pmatrix} \begin{pmatrix} L_{\le j}u \\ L_{>j}P_j(u) \end{pmatrix} \right)\\
&= (\Id + R_{11})L_{\le j}u + R_{12}L_{>j}P_j(u).
\end{split}
\]
In particular, we can write
\[
A_j = (\Id + S^{(j)})L_{\le j},
\]
where $S^{(j)} = R_{11} + R_{12}L_{>j}P_j L_{\le j}^{-1}$. Moreover, recalling \eqref{eq: est L>jL<j} and using that $\|R_{11}\|, \|R_{12}\| \le \|R_n\|$ and $\|P_j\| \le C\|R_n\|$ where this last inequality comes from Lemma \ref{lem:graph-transform}, by taking $n$ sufficiently large, we get that $\|S^{(j)}\| \le 2\|R_n\|$.

We now consider $A_j$ with respect to the splitting $U_j = U_{j-1}\oplus \R^{m_j}$. Relative to this decomposition,
\[
L_{\le j} = \begin{pmatrix} L_{\le j-1} & 0\\ 0 & L_j \end{pmatrix},
\]
and $\|L_j\| \|L_{\le j-1}^{-1}\| \le \|L_{>j-1}\| \|L_{\le j-1}^{-1}\| $. Moreover, as already observed, $\|S^{(j)}\|\le 2\|R_n\|$. In particular, $\|S^{(j)}\|+\|L_j\| \|L_{\le j-1}^{-1}\|$ may be taken arbitrarily small. Therefore all the hypotheses of Lemma~\ref{lem:graph-transform} are satisfied for the operator $A_j$ acting on the decomposition $U_j=U_{j-1}\oplus\R^{m_j}$. Applying that lemma, there exists a unique $A_j$-invariant graph $G_{j-1} = \operatorname{graph}(Q) \subset U_j$, where $Q\colon U_{j-1}\rightarrow\R^{m_j}$ satisfies $\|Q\| \le C'\|S^{(j)}\| \le 2C'\|R_n\|.$

Now, we lift this sub-graph back to the global space $\mathbb R^d$ by setting $\widetilde F_{j-1} = \Gamma_j(G_{j-1}) \subset F_j$. We claim that $\widetilde F_{j-1}$ is invariant under $\widehat B_n(x)$. Indeed, given $w \in \widetilde F_{j-1}$, there exists $u \in G_{j-1}$ such that $w = \Gamma_j(u) $. Thus, since $\Gamma_j(u) \in F_j$ and $F_j$ is $\widehat B_n(x)$-invariant, the vector $\widehat B_n(x)(\Gamma_j(u))$ must lie entirely in $F_j$. Consequently, as $\Gamma_j \circ \pi_j$ acts as the identity map on $F_j$, we have
\[
\widehat B_n(x)(w) = \widehat B_n(x)(\Gamma_j(u)) = \Gamma_j \left( \pi_j(\widehat B_n(\Gamma_j(u))) \right) = \Gamma_j(A_j(u)).
\]
Thus, since $G_{j-1}$ is $A_j$-invariant, it follows that $A_j(u) \in G_{j-1}$, which implies $\Gamma_j(A_j(u)) \in \Gamma_j(G_{j-1}) = \widetilde F_{j-1}$, proving that $\widetilde F_{j-1}$ is $\widehat B_n(x)$-invariant.

Moreover, from the definition of $\widetilde F_{j-1}$, it follows that every vector in $\widetilde F_{j-1}$ can be expressed as $u + Q(u) + P_j(u+Q(u))$ for $u \in U_{j-1}$. Since $Q(u) \in \mathbb R^{m_j} \subset V_{j-1}$ and $P_j(u+Q(u)) \in V_j \subset V_{j-1}$, this subspace can be rewritten directly as a global graph over $U_{j-1}$. Namely,
\[
\widetilde F_{j-1} = \operatorname{graph}(\widetilde P), 
\]
where $\widetilde P:U_{j-1}\to V_{j-1}$ is given by $\widetilde P(u) = Q(u) + P_j(u+Q(u))$. Furthermore, the norm estimates on $Q$ and $P_j$ imply $\|\widetilde P\| \le C''\|R_n\|$ for some $C''$ which is independent on $n$. Thus, by taking $n$ sufficiently large and $\delta$ sufficiently small so that $C''\|R_n\|<1/2$, we get that $\|\widetilde P\|\leq 1$. In particular, $\widetilde P$ is inside the ball where the graph transform operator $\mathcal{G}_{j-1}$ from Lemma~\ref{lem:graph-transform} acts as a contraction.  Moreover, since both $\widetilde F_{j-1} = \operatorname{graph}(\widetilde P)$ and $F_{j-1} = \operatorname{graph}(P_{j-1})$ are $\widehat B_n(x)$-invariant graphs over $U_{j-1}$, it follows from the construction of $\mathcal{G}_{j-1}$ that they must be fixed points of this operator. Thus, since the fixed point of a contraction is unique, we get that $\widetilde P = P_{j-1}$, and thus $\widetilde F_{j-1} = F_{j-1}$. Finally, recalling that, by construction, $\widetilde F_{j-1} \subset F_j$, it follows that $F_{j-1} \subset F_j$, completing the proof.
\end{proof}

\begin{lemma}\label{lem:quotient-representation}
For each $j=1,\ldots,\ell$, let $\widetilde T_j\colon F_j/F_{j-1} \to F_j/F_{j-1}$ denote the quotient operator induced by $\widehat B_n(x)$. Then $\widetilde T_j$ is naturally conjugated to a linear operator $T_j\colon \R^{m_j}\to\R^{m_j}$ of the form
\begin{equation}\label{eq:multiplicative-block-form}
T_j=(\Id+S_j)L_j,
\end{equation}
where
\[
\|S_j\| \le C\left( C_0^2\omega_H(\delta) + C_0\omega_\Psi(2\delta) \right),
\]
for some constant $C>0$ independent of $n$.
\end{lemma}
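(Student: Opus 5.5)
The plan is to realize the quotient $F_j/F_{j-1}$ concretely through the maps already built in the proof of Lemma \ref{lem:invariant-flag}. Recall that $F_j=\Gamma_j(U_j)$ and that $\widehat B_n(x)|_{F_j}$ is conjugated by $\Gamma_j$ to $A_j=(\Id+S^{(j)})L_{\le j}$ acting on $U_j$. Moreover, $F_{j-1}=\Gamma_j(G_{j-1})$ with $G_{j-1}=\operatorname{graph}(Q)$, where $Q\colon U_{j-1}\to\R^{m_j}$. Hence $\Gamma_j$ induces an isomorphism $U_j/G_{j-1}\to F_j/F_{j-1}$ conjugating the quotient of $A_j$ to $\widetilde T_j$.

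Next we identify $U_j/G_{j-1}$ with $\R^{m_j}$. Since $U_j=G_{j-1}\oplus \R^{m_j}$, the projection along $G_{j-1}$ onto $\R^{m_j}$ is given by $\rho(u,w)=w-Qu$ for $u\in U_{j-1}$ and $w\in\R^{m_j}$. This map is surjective with kernel $G_{j-1}$, so it induces an isomorphism $U_j/G_{j-1}\cong\R^{m_j}$. The conjugated operator is therefore
\[
T_j(w)=\rho\big(A_j(0,w)\big).
\]
This expression is well defined because $G_{j-1}$ is $A_j$-invariant.

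We then compute $T_j$ in block form. Write $S^{(j)}=\begin{pmatrix} S_{11}&S_{12}\\ S_{21}&S_{22}\end{pmatrix}$ relative to $U_{j-1}\oplus\R^{m_j}$. Then
\[
A_j(0,w)=\big(S_{12}L_jw,\ (\Id+S_{22})L_jw\big),
\]
and applying $\rho$ gives
\[
T_j=(\Id+S_{22}-QS_{12})L_j.
\]
So we set $S_j:=S_{22}-QS_{12}$. For the bound, we use $\|S^{(j)}\|\le 2\|R_n\|$, which was established in the proof of Lemma \ref{lem:invariant-flag}, together with $\|Q\|\le 2C'\|R_n\|\le 1$. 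These give $\|S_j\|\le 2\|S^{(j)}\|\le 4\|R_n\|$. Combining this with \eqref{eq: estimate Rn} yields the stated estimate with $C=4$.

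Two edge cases need separate treatment. When $j=1$ we have $F_0=\{0\}$, so there is no $Q$ and $T_1=(\Id+S^{(1)})L_1$ directly. When $j=\ell$ we have $F_\ell=\R^d$, so $\Gamma_\ell=\Id$, $A_\ell=\widehat B_n(x)$ and $S^{(\ell)}=R_n$, and we apply the same projection argument with $Q=\widetilde P=P_{\ell-1}$.

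The main obstacle is to keep all constants independent of $n$. This holds because every bound used, namely those on $Q$, $P_j$ and $S^{(j)}$, was obtained from Lemma \ref{lem:graph-transform} with $n$-independent constants once $n$ is large.
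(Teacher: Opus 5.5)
Your proof is correct and follows the paper's route: $\overline\Gamma_j$ conjugates $\widetilde T_j$ to the quotient of $A_j$ on $U_j/G_{j-1}$, which is identified with $\R^{m_j}$ via the projection parallel to $G_{j-1}$, so that $T_j=\Pi^Q A_j J$. Your explicit formula $S_j=S_{22}-QS_{12}$ makes the paper's bound $\|\Pi^Q\|\le C$ concrete (giving $C=4$), and your separate treatment of $j=1$ and $j=\ell$ fills in cases the paper leaves implicit, since it builds $Q$ only for $2\le j\le \ell-1$.
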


\begin{proof}
We retain all the notation from the proof of Lemma~\ref{lem:invariant-flag}. We have established there that the linear graph parametrization $\Gamma_j \colon U_j\to F_j$ satisfies $\Gamma_j(G_{j-1})=F_{j-1}$. Consequently, $\Gamma_j$ induces a natural quotient isomorphism
\[
\overline\Gamma_j\colon U_j/G_{j-1} \to F_j/F_{j-1},
\]
given by $\overline\Gamma_j([u])=[\Gamma_j(u)]$. Moreover, since $G_{j-1}$ is $A_j$-invariant, the quotient operator $\widetilde A_j\colon U_j/G_{j-1} \to U_j/G_{j-1}$ given by $\widetilde A_j([u])=[A_j(u)]$
is well-defined.

On the other hand, since $F_{j-1}$ is $\widehat B_n(x)$-invariant, the restriction of $\widehat B_n(x)$ to $F_j$ induces the quotient operator
\[
\widetilde T_j\colon F_j/F_{j-1} \to F_j/F_{j-1},
\]
appearing in the statement of the lemma, namely, $\widetilde T_j([v])=[\widehat B_n(x)v]$ for $v\in F_j$.
Thus, since $\Gamma_jA_j=\widehat B_n(x)|_{F_j}\Gamma_j$, passing to the quotient gives us that
\[
\widetilde T_j = \overline\Gamma_j \widetilde A_j \overline\Gamma_j^{-1}.
\]

Now, we observe  that, since $G_{j-1} = \operatorname{graph}(Q)$ is an $A_j$-invariant subspace of $U_j$ where $Q\colon U_{j-1} \to \mathbb R^{m_j}$, we have the direct sum decomposition $U_j = G_{j-1} \oplus \mathbb R^{m_j}$. 
Indeed, every vector $(x,y)\in U_{j-1}\oplus\mathbb R^{m_j}=U_j$ can be written uniquely as
\[
(x,y)=(x,Qx)+(0,y-Qx),
\]
where $(x,Qx)\in G_{j-1}$ and $(0,y-Qx)\in\mathbb R^{m_j}$. 
Hence the quotient space $U_j/G_{j-1}$ is naturally identified with
$\mathbb R^{m_j}$ through the linear isomorphism
\[
\Phi\colon U_j/G_{j-1}\to \mathbb R^{m_j}\]
given by $[u]\mapsto w$ where $w$ is the unique representative of the class $[u]$ contained in $\mathbb R^{m_j}$. Using this identification, we define $T_j\colon \mathbb R^{m_j}\to \mathbb R^{m_j}$ by
\[
T_j=\Phi\widetilde A_j\Phi^{-1}.
\]
In particular,
\[
T_j=\Phi\overline\Gamma_j^{-1}\widetilde T_j\overline\Gamma_j\Phi^{-1},
\]
so that $T_j$ is naturally conjugated to the quotient operator $\widetilde T_j$.

Let $J\colon \mathbb R^{m_j}\hookrightarrow U_j$ denote the natural inclusion and $\Pi^Q\colon U_j\to \mathbb R^{m_j}$ be the projection parallel to $G_{j-1}$. Since
\[
\Phi^{-1}(w)=[Jw] \;\text{ and }\;\Phi([u])=\Pi^Q(u),
\]
we get that
\[
T_j=\Pi^Q A_j J.
\]

By Lemma~\ref{lem:invariant-flag}, $A_j=(\Id+S^{(j)})L_{\le j}$ with $\|S^{(j)}\|\le C\|R_n\|$. Thus, since $L_{\le j}J=JL_j$ and $\Pi^QJ=\Id$,
it follows that
\[
\begin{aligned}
T_j&=\Pi^Q(\Id+S^{(j)})L_{\le j}J=(\Pi^QJ+\Pi^QS^{(j)}J)L_j=(\Id+S_j)L_j,
\end{aligned}
\]
where $S_j=\Pi^QS^{(j)}J$. Moreover, since $\|Q\|\leq 2C'\|R_n\|$, the angle between $G_{j-1}$ and $\mathbb R^{m_j}$ is uniformly bounded away from zero. In particular, $\|\Pi^Q\|\le C$ for some constant $C>0$. Therefore,
\[
\|S_j\|\le\|\Pi^Q\|\|S^{(j)}\|\|J\|\le C\|R_n\|.
\]
Combining this estimate with \eqref{eq: estimate Rn} we conclude that
\[
\|S_j\|\le C\left(C_0^2\omega_H(\delta)+ C_0\omega_\Psi(2\delta)\right),
\]
which completes the proof.
\end{proof}

\subsection{Localization and bounds for eigenvalues}
In this section, we establish a relationship between the eigenvalues of a linear map and those of its induced quotient operator. Furthermore, we provide explicit bounds for the eigenvalues of certain perturbations of a linear map. These are all simple observations which hold for general linear maps.

\begin{lemma}\label{lem:spectrum-flag}
Let $0=F_0\subset F_1\subset\cdots\subset F_\ell=\mathbb R^d$ be an invariant flag for a linear operator $T\colon \mathbb R^d\to \mathbb R^d$. For each $j$, let $\widetilde T_j \colon F_j/F_{j-1}\to F_j/F_{j-1}$ denote the induced quotient operator. Then
\[
\sigma(T) = \bigcup_{j=1}^{\ell}\sigma(\widetilde T_j),
\]
where $\sigma(\cdot)$ denotes the spectrum of the operator and the union is taken counting algebraic multiplicities.
\end{lemma}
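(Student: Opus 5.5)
The plan is to prove the identity at the level of characteristic polynomials: we will show
\[
\chi_T(t)=\prod_{j=1}^{\ell}\chi_{\widetilde T_j}(t).
\]
Taking roots over $\mathbb C$ then gives the spectrum with algebraic multiplicities. Since the lemma is purely linear-algebraic, we work with an arbitrary invariant flag and make no use of the dynamics.

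First, choose for each $j$ a complementary subspace $W_j$ with $F_j=F_{j-1}\oplus W_j$. This gives $\mathbb R^d=W_1\oplus\cdots\oplus W_\ell$. Concatenate bases of $W_1,\dots,W_\ell$ into a single basis of $\mathbb R^d$. Because $T(F_j)\subset F_j=W_1\oplus\cdots\oplus W_j$, the matrix of $T$ in this basis is block upper triangular. Its diagonal block $D_j$ is the matrix of $\operatorname{pr}_{W_j}\circ T|_{W_j}$, where $\operatorname{pr}_{W_j}$ is the projection onto $W_j$ along the other summands.

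Next, identify $D_j$ with the quotient operator. The restriction of the quotient map $q_j\colon F_j\to F_j/F_{j-1}$ to $W_j$ is an isomorphism. For $w\in W_j$, the vectors $Tw$ and $\operatorname{pr}_{W_j}Tw$ differ by an element of $F_{j-1}$. Hence
\[
\widetilde T_j\, q_j(w)=q_j(Tw)=q_j(\operatorname{pr}_{W_j}Tw),
\]
so $D_j$ is conjugate via $q_j|_{W_j}$ to $\widetilde T_j$, and $\chi_{D_j}=\chi_{\widetilde T_j}$.

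Finally, the determinant of a block upper triangular matrix is the product of the determinants of its diagonal blocks. Applying this to $tI-T$ gives the factorization of $\chi_T$ stated above. The only point needing care is the bookkeeping showing that the diagonal block really is the quotient operator; there is no substantive obstacle. Alternatively, one could argue by induction on $\ell$ using the short exact sequence $0\to F_{\ell-1}\to\mathbb R^d\to\mathbb R^d/F_{\ell-1}\to0$ and multiplicativity of characteristic polynomials, which is the same argument.
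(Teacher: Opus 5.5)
Your proof is correct and follows essentially the same route as the paper: choose a basis adapted to the flag, observe that $T$ becomes block upper triangular with diagonal blocks representing the quotient operators $\widetilde T_j$, and factor the characteristic polynomial. Your use of complements $W_j$ and the conjugacy through $q_j|_{W_j}$ simply spells out the identification of the diagonal blocks, which the paper states without detail.
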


\begin{proof}
Let us consider a basis of $\mathbb R^d$ adapted to the flag. More precisely, choose vectors $\mathcal B_j$ whose images form a basis of $F_j/F_{j-1}$, and concatenate them to get a basis for $\mathbb R^d$. Relative to the resulting basis, the matrix of $T$ is block upper triangular,
\[
[T]=
\begin{pmatrix}
T_1&*&\cdots&*\\
0&T_2&\cdots&*\\
\vdots&\ddots&\ddots&\vdots\\
0&\cdots&0&T_\ell
\end{pmatrix},
\]
where the diagonal block $T_j$ is precisely the matrix of the induced quotient operator. Therefore,
\[
\det(\lambda \Id-T) = \prod_{j=1}^{\ell} \det(\lambda \Id-T_j).
\]
Hence, the characteristic polynomial of $T$ is the product of the characteristic polynomials of the quotient operators. The conclusion follows immediately.
\end{proof}

\begin{lemma}\label{lem:bilateral-eigenvalue}
Let $\rho \in \mathbb{C}$ be any eigenvalue of $T_j = (\Id + S_j)L_j$ with $\|S_j\| < 1$. Then $\rho$ satisfies 
\begin{equation*}
(1 - \|S_j\|)m(L_j) \le |\rho| \le (1 + \|S_j\|)\|L_j\|.
\end{equation*}
\end{lemma}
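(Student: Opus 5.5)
The plan is to work directly with an eigenvector of $T_j$ over $\mathbb{C}$ and compare norms on both sides of the eigenvalue equation. Complexify $\R^{m_j}$ and extend $L_j$ and $S_j$ complex-linearly. The operator norm and the conorm do not change under this extension, since $\|M(a+ib)\|^2=\|Ma\|^2+\|Mb\|^2$ for real $M$ with the Euclidean norm. Then pick a nonzero $w\in\mathbb{C}^{m_j}$ with $T_jw=\rho w$.

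\emph{Upper bound.} Take norms in $\rho w=(\Id+S_j)L_jw$ to get
\[
|\rho|\,\|w\|\le\|\Id+S_j\|\,\|L_j\|\,\|w\|\le(1+\|S_j\|)\|L_j\|\,\|w\|.
\]
Dividing by $\|w\|$ gives the right-hand inequality.

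\emph{Lower bound.} Set $v=L_jw\neq 0$; this is nonzero because $L_j$ is invertible, as $m(L_j)>0$ by \eqref{eq:block-estimates}. The reverse triangle inequality gives
\[
\|(\Id+S_j)v\|\ge\|v\|-\|S_jv\|\ge(1-\|S_j\|)\|v\|.
\]
Combining this with $\|v\|\ge m(L_j)\|w\|$ yields
\[
|\rho|\,\|w\|=\|(\Id+S_j)L_jw\|\ge(1-\|S_j\|)\,m(L_j)\,\|w\|,
\]
and dividing by $\|w\|$ gives the left-hand inequality. The hypothesis $\|S_j\|<1$ is used only to make this lower bound positive and meaningful.

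The only step needing any care is the complexification: we must check that the real operator norm and conorm coincide with their complex counterparts, so that the eigenvector argument applies to non-real eigenvalues. This follows from the identity above, which shows $\|M(a+ib)\|^2$ lies between $m(M)^2(\|a\|^2+\|b\|^2)$ and $\|M\|^2(\|a\|^2+\|b\|^2)$. As an alternative, we could avoid complexification by using the spectral radius. The upper bound follows from $|\rho|\le\|T_j\|$. The lower bound follows from $|\rho|^{-1}\le\|T_j^{-1}\|\le\|L_j^{-1}\|\,\|(\Id+S_j)^{-1}\|\le m(L_j)^{-1}(1-\|S_j\|)^{-1}$, where the last step uses the Neumann series. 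Either route is routine; I would present the spectral-radius version because it is shortest.
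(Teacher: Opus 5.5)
Your proof is correct and is essentially the paper's argument: take an eigenvector in the complexification, then use the triangle inequality for the upper bound and the reverse triangle inequality together with the conorm for the lower bound. Your check that $\|\cdot\|$ and $m(\cdot)$ are unchanged under complexification fills in a point the paper leaves implicit, and your spectral-radius alternative is also valid; it bounds $|\rho|^{-1}\le\|T_j^{-1}\|$ using the Neumann estimate $\|(\Id+S_j)^{-1}\|\le(1-\|S_j\|)^{-1}$.
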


\begin{proof}
We work in the complexification $\mathbb C^{m_j}$, equipped with the Hermitian extension of the Euclidean norm. Let $v \in \mathbb{C}^{m_j}$ be such that $T_j v = \rho v$ with $\|v\| = 1$. By definition of $T_j$, we have $\rho v = (\Id + S_j)L_j v$. Taking norms and applying the triangle inequality we get that
\[|\rho| = \|\rho v\| =\|(\Id + S_j)L_j v\| \le \|\Id + S_j\| \|L_j v\| \le (1 + \|S_j\|)\|L_j\|.\]
Conversely, applying the reverse triangle inequality yields
\[ \begin{split}
    |\rho| = \|\rho v\| 
	\geq \|L_j v\| - \|S_j L_j v\| 
	\geq (1 - \|S_j\|) \|L_j v\| 
	\geq (1 - \|S_j\|) m(L_j),
\end{split} \]
which completes the proof.
\end{proof}

\subsection{Completion of the proof}

We are now ready to finish the proof of Theorem \ref{theo: main}. We start recalling that, since $p$ is a periodic point of period $n$, the Lyapunov exponents of the periodic measure $\mu_p$ are precisely
\[
\frac1n\log|\rho|,
\]
where $\rho$ runs over the eigenvalues of $A^n(p)$, counted according to algebraic multiplicity. Also recall that the eigenvalues of $A^n(p)$ are the same as those of $\widehat B_n(x)$. 

For each fixed $j=1,\ldots,\ell$, let $\rho^j_1, \ldots \rho^j_{m_j}$ be the eigenvalues of $T_j = (\Id + S_j)L_j$, counted with algebraic multiplicity, with $|\rho^j_1| \geq \cdots \geq |\rho^j_{m_j}|$. By Lemma \ref{lem:quotient-representation}, for each $j$, $\sigma(T_j) = \sigma(\widetilde T_j)$, and hence, by Lemma \ref{lem:spectrum-flag}, $\sigma(\widehat B_n(x)) = \bigcup_{j=1}^{\ell}\sigma(T_j)$, counting with algebraic multiplicity. Thus, the quantities $\frac1n\log|\rho^j_k|$, for $j=1,\ldots,\ell$ and $k=1,\ldots,m_j$, are precisely the Lyapunov exponents of $\mu_p$. Moreover, Lemma \ref{lem:bilateral-eigenvalue} together with \eqref{eq:block-estimates} yields
\[
(1 - \|S_j\|)(C_0^2K)^{-1} e^{(\lambda_j-\theta)n} \le |\rho^j_k| \le (1 + \|S_j\|)C_0^2Ke^{(\lambda_j+\theta)n}
\]
for all $j=1,\ldots,\ell$ and $k=1,\ldots,m_j$, from which we get
\[
\frac1n\log(1-\|S_j\|) - \frac1n\log(C_0^2K) + (\lambda_j-\theta) \leq \frac1n\log|\rho^j_k|
\]
as well as
\[
\frac1n\log(1+\|S_j\|) + \frac1n\log(C_0^2K) + (\lambda_j+\theta) \geq \frac1n\log|\rho^j_k|.
\]
Thus, by taking $\delta>0$ sufficiently small and $n$ sufficiently large, recalling the bound on $\|S_j\|$ given by Lemma \ref{lem:quotient-representation}, the previous two relations give us that
\[
\lambda_j-2\theta \leq \frac1n\log|\rho^j_k| \leq \lambda_j + 2\theta,
\]
and, consequently,
\begin{equation}\label{eq:exponent-approx}
\left|\frac1n\log|\rho^j_k| - \lambda_j \right| \leq 2\theta
\end{equation}
for all $j=1,\ldots,\ell$ and $k=1,\ldots,m_j$.

Finally, to complete the proof of the theorem, it remains to show that \eqref{eq:exponent-approx} implies \eqref{eq: main theo} (with $2\theta$ instead of just $\theta$). To this end, we only need to verify that the periodic exponents $\frac1n\log|\rho^j_k|$ are ordered appropriately. Specifically, that
\[
\begin{split}
\frac1n\log|\rho^1_1| \geq \cdots \geq \frac1n\log|\rho^1_{m_1}| \geq \frac1n\log|\rho^2_1| \geq \cdots \geq \frac1n\log|\rho^2_{m_2}| \geq \frac1n\log|\rho^3_1| \geq \cdots \\ \cdots \geq\frac1n\log|\rho^\ell_{m_\ell}|.
\end{split}
\]
To establish this global ordering, it suffices to show that $\frac1n\log|\rho^j_{m_j}| \geq \frac1n\log|\rho^{j+1}_1|$ for all $j=1,\ldots,\ell-1$. Now, combining \eqref{eq:exponent-approx} with the initial assumption $\Delta>4\theta$, we obtain
\[
\begin{split}
\frac1n\log|\rho^j_{m_j}| &\geq \lambda_j-2\theta = (\lambda_j-\lambda_{j+1})+\lambda_{j+1} - 2\theta \\ &\geq \Delta + \lambda_{j+1} - 2\theta > \lambda_{j+1} + 2\theta \\ &\geq \frac1n\log|\rho^{j+1}_1|
\end{split}
\] 
which then completes the proof of Theorem \ref{theo: main}.

\subsection{General statement}

One may observe that the continuity hypothesis on the cocycle $A\colon M\to GL(d)$ in the statement of Theorem \ref{theo: main} is not used anywhere in the proof. In fact, we only needed continuity of the function $(x,y)\mapsto [x,y]$ given by the local product structure and of the stable and unstable holonomies. The latter, however, are generally expected to be continuous only if there is some continuity condition on the cocycle. Nevertheless, a careful reader may have noticed that even the holonomies need not be continuous everywhere, only in a neighborhood of the point $x \in \widetilde \Reg ^{K,\theta}$, which is fixed throughout the proof. Below, we restate Theorem \ref{theo: main} more generally by relaxing such continuity assumptions. In the next section, we provide an example of a discontinuous cocycle that fits the general case.

\begin{theorem}\label{theo: general}
Let $f\colon M\to M $ be a homeomorphism with local product structure exhibiting the periodic closing property, $\mu$ an ergodic $f$-invariant probability measure, and $A\colon M\to GL(d,\mathbb{R})$ a measurable map with $\log^+\|A^{\pm1}\| \in L^1(\mu)$ admitting a family of (not necessarily continuous) invariant stable and unstable holonomies. Assume there exists an open set $W \subseteq M$ such that the restrictions $H^s|_{W \times W}$ and $H^u|_{W \times W}$ are continuous and $\operatorname{supp}(\mu) \cap W \neq \varnothing$. Then, for any $\theta>0$, there exists a periodic point $p\in M$ such that
\begin{equation*}
	|\gamma _i (\mu)- \gamma _i (\mu_p)|<\theta
\end{equation*} 
for every $i=1,\ldots ,d$.
\end{theorem}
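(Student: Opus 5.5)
The plan is to rerun the proof of Theorem \ref{theo: main} essentially verbatim, checking at each step which hypothesis is actually used. We localize everything in $W$. Since $\log^+\|A^{\pm1}\|\in L^1(\mu)$, Oseledets' Theorem still applies, giving $\Reg$, the exponents $\lambda_j$, and the sets $\Reg^{K,\theta}$ with $\mu(\Reg^{K,\theta})\to1$; continuity of $A$ was never needed for these. Because $\operatorname{supp}(\mu)\cap W\neq\varnothing$ and $W$ is open, $\mu(W)>0$. Pick a point $x_*\in\operatorname{supp}(\mu)\cap W$ and a radius $r>0$ with $\overline{B(x_*,2r)}\subset W$. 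Then $\mu(B(x_*,r))>0$, so for $K$ large we get $\mu(\Reg^{K,\theta}\cap B(x_*,r))>0$. Lusin's Theorem then gives a compact $\widetilde\Reg^{K,\theta}\subset \Reg^{K,\theta}\cap B(x_*,r)$ of positive measure on which the Oseledets splitting is continuous. Poincaré recurrence gives $x$ and an arbitrarily large $n$ as in \eqref{eq: size of n}, with $x,f^n(x)\in\widetilde\Reg^{K,\theta}$ and $d(x,f^n(x))<\varepsilon_0$.

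Next I would check that every holonomy entering the argument is evaluated at points of $W$. The transition maps $H_{f^i(x),f^i(p)}$ for intermediate $i$ enter only through the algebraic identity of Lemma \ref{lem: equivariance transition map}. That identity uses only properties (i)--(iii) and $f(z_i)=z_{i+1}$, and the latter lemma uses only continuity of $[\cdot,\cdot]$ and of $f$. So no continuity of $H^{s,u}$ is needed along the orbit, and \eqref{eq: aux lemma}, \eqref{eq: relat Anp Bnx} hold unchanged. The only place continuity is used is the estimate \eqref{eq: Hfnx x close to Id}. There, $H_{f^n(x),x}$ involves $H^u_{z_0,x}$, $H^s_{p,z_0}$, $H^s_{z_n,p}$ and $H^u_{f^n(x),z_n}$, with all the points $x,f^n(x),p,z_0,z_n$ within $\omega(\delta)$ of $x\in B(x_*,r)$. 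Here $\omega$ is the modulus coming from uniform continuity of $[\cdot,\cdot]$, with $d(x,z_0)=d([x,x],[p,x])$. Hence for $\delta$ small they all lie in the compact set $\overline{B(x_*,2r)}\subset W$.

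On that compact set, $H^s$ and $H^u$ restricted to the (closed) domains are uniformly continuous and equal $\Id$ on the diagonal. This gives a modulus $\omega_H$, depending only on $\delta$ and not on $n$, such that $\|H_{f^n(x),x}-\Id\|<\omega_H(\delta)$. One point needs care: $H^s_{p,z_0}H^s_{z_n,p}$ should be compared with $\Id$ using $p=f^n(p)$ and $z_n\in W^s_\varepsilon(p)$. Both factors are close to $\Id$ because $d(z_n,p)$ and $d(z_0,p)$ are small. In practice I would write $H^s_{p,z_0}H^s_{z_n,p}=H^s_{z_n,z_0}$ via (ii), if $z_n$ and $z_0$ are on a common local stable set, or simply bound each factor separately.

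The main obstacle is the order of quantifiers. In the proof of Theorem \ref{theo: main}, $\delta$ is chosen first, then $\varepsilon_0$, then $n$ large. The smallness of $\|R_n\|$ must hold uniformly in $n$, and this is exactly why the modulus of continuity has to be uniform, hence why I restrict to a compact subset of $W$ rather than pointwise continuity at $x$. With this in hand, \eqref{eq: estimate Rn} holds, and Lemmas \ref{lem:graph-transform}, \ref{lem:invariant-flag}, \ref{lem:quotient-representation}, \ref{lem:spectrum-flag} and \ref{lem:bilateral-eigenvalue} are purely linear-algebraic statements about $\widehat B_n(x)$. The completion of the proof then goes through word for word, yielding $|\gamma_i(\mu)-\gamma_i(\mu_p)|\le 2\theta$; replacing $\theta$ by $\theta/2$ finishes the argument.
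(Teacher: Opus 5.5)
Your proposal is correct and follows the paper's proof. You rerun the argument of Theorem \ref{theo: main}, observe that continuity of the holonomies enters only through \eqref{eq: Hfnx x close to Id}, and localize $x$, $f^n(x)$, $p$, $z_0$, $z_n$ inside $W$, where $H^s,H^u$ are uniformly continuous on a compact set. Placing the Lusin set inside $B(x_*,r)$ with $\overline{B(x_*,2r)}\subset W$ is a slightly cleaner version of the paper's ``compact neighborhood of $(x,x)$'': it makes $\omega_H$ independent of $x$, so it does not matter that $x$ is chosen after $\delta$. One small fix: to get the strict inequality, replace $\theta$ by $\theta/3$ rather than $\theta/2$.
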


\begin{proof}
    In the proof setup of Theorem \ref{theo: main}, we may assume $\mu\left( \widetilde \Reg ^{K,\theta} \right) > 1-\mu(W)$, which then implies the recurrent point $x \in \widetilde \Reg ^{K,\theta}$ may also be chosen to be inside $W$. Thus, for sufficiently large $n$, we also obtain $f^n(x) \in W$. This enables us to obtain \eqref{eq: Hfnx x close to Id} from \eqref{eq: def H and Bn} by observing that the maps $(z,y)\mapsto H^\ast_{z,y}$ ($\ast=s,u$) are uniformly continuous in a compact neighborhood of $(x,x) \in W\times W$, inside of which lie the pairs $(z_0,x)$, $(p,z_0)$, $(z_n,p)$ and $(f^n(x),z_n)$ for sufficiently large $n$. The rest of the proof follows identically.
\end{proof}

\section{Examples}\label{sec: examples}
There are several well-known classes of cocycles admitting a family of continuous invariant holonomies as defined in Section \ref{sec: cocycles and inv hol}. Examples include fiber-bunched cocycles over hyperbolic systems \cite{BGV03,Via08} and locally constant cocycles over subshifts of finite type \cite{BBB}, for which a family of holonomies can be obtained via the formulas
\[H^s_{y,z}=\lim_{n\to +\infty}A^n(z)^{-1}A^n(y) \; \text{ for } y,z\in W^s_\varepsilon(x)\]
and
\[H^u_{y,z}=\lim_{n\to +\infty}A^{-n}(z)^{-1}A^{-n}(y) \; \text{ for } y,z\in W^u_\varepsilon(x).\]
However, these examples are H\"older continuous, and thus earlier versions of the approximation results are already applicable to them. In this section, we present examples of continuous cocycles that are not H\"older continuous but still admit a family of continuous invariant holonomies. In particular, our main result applies to them, whereas previous results do not. We also present an example of a discontinuous cocycle that fits in the setting of Theorem \ref{theo: general}.

\begin{example}\label{example1}
Let $f\colon M\to M$ be an Anosov diffeomorphism, $P\colon M\to GL(d,\mathbb R)$ a continuous map and $B\in GL(d,\mathbb R) $. Consider $A\colon M\to GL(d,\mathbb R)$ given by
    \[A(x)=P(f(x))BP(x)^{-1} \;\text{ for } x\in M.\]
 Then 
\[H^s_{ y,z} = P(z) P(y)^{-1} \quad \text{and} \quad H^u_{y,z} = P(z) P(y)^{-1}\]
 form a family of continuous invariant holonomies for $A$. Moreover, we can choose $B$ and $P$ appropriately so that $A$ is not H\"older continuous.
\end{example}

\begin{example}\label{example2}
Let $M = \{0, 1\}^{\mathbb{Z}}$ be the space of bi-infinite sequences $x = (\dots, x_{-1}, x_0, x_1, \dots)$, equipped with the shift map $f \colon M\to M$ defined by $(f(x))_i = x_{i+1}$. We consider $M$ endowed with the standard metric $d(x,y) = \beta^{N(x,y)}$ for a fixed $0 < \beta < 1$, where $N(x,y) = \min\{|i| : x_i \neq y_i\}$. Recall that the local stable and unstable manifolds for a point $x \in M$ are defined respectively as
\[
W^s_{\text{loc}}(x) = \{y \in M : y_i = x_i, \; \forall i \ge 0\},
\]
and
\[
W^u_{\text{loc}}(x) = \{y \in M : y_i = x_i, \; \forall i \le 0\}.
\]

Let $A \colon M \to \text{GL}(2, \mathbb{R})$ be given by
\[
A(x) = \begin{pmatrix} a(x) & 0 \\ 0 & a(x)^{-1} \end{pmatrix}, 
\]
where the scalar function $\log a(x)$ is defined by
\[
\log a(x) = \sum_{k=0}^{\infty} \frac{x_k}{(k+1)^3}.
\]
Note that $\log a(x)$ depends only on future coordinates. Moreover, $A$ is continuous but not H\"older continuous. Indeed, the $n$-th variation of $\log a$ satisfies
\[
\operatorname{var}_n(\log a) = \sup_{N(x,y) \ge n} |\log a(x) - \log a(y)| = \sum_{k \ge n} \frac{1}{(k+1)^3} \sim \frac{1}{2n^2}.
\]
This holds because $N(x,y) \ge n$ implies $x_k = y_k$ for all $|k|\leq n-1$, and $|x_k - y_k| \le 1$ for every $k\in \mathbb{Z}$. Furthermore, the supremum is attained since there exist sequences $x$ and $y$ with $N(x,y)=n$ such that $x_k - y_k = 1$ for all $k\geq n$. In particular, $\operatorname{var}_n(\log a)$ decays polynomially, meaning $\log a$ is continuous. However, if $\log a$ were H\"older continuous, we would have $\operatorname{var}_n(\log a)\leq C\beta ^{\alpha n}$ for some $C,\alpha >0$, which contradicts the polynomial decay established above. Thus, $\log a$ is not H\"older continuous.

We now show that $A$ admits continuous stable and unstable holonomies. Given $y \in W^s_{\text{loc}}(x)$, we have $y_i = x_i$ for all $i \ge 0$. Since $\log a(x)$ depends only on non-negative coordinates, $a(x) = a(y)$, yielding $A(x) = A(y)$. Consequently, $H^s_{x,y}=\text{Id}$ satisfies properties i)--iii) from Section \ref{sec: cocycles and inv hol}. 

For $y \in W^u_{\text{loc}}(x)$, consider
\[
H^u_{x,y} = \lim_{n \to +\infty} A(f^{-1}(y)) \dots A(f^{-n}(y)) A(f^{-n}(x))^{-1} \dots A(f^{-1}(x))^{-1}.
\]
Since $A$ is diagonal, the holonomy matrix takes the form 
\[
H^u_{x,y} = \begin{pmatrix} e^{h^u(x,y)} & 0 \\ 0 & e^{-h^u(x,y)} \end{pmatrix},
\] 
where 
\[
h^u(x,y) = \sum_{j=1}^{\infty} \left( \log a(f^{-j}(y)) - \log a(f^{-j}(x)) \right).
\]
Moreover, as $y \in W_{\text{loc}}^{u}(x)$, the coordinates of $x$ and $y$ coincide for all non-positive indices, which gives us that $N(x,y) \ge 1$. By the definition of $\log a(x)$ and standard integral bounds for series, we have
\[ \begin{split}
	|\log a(f^{-j}(y)) - \log a(f^{-j}(x))| &\leq \sum_{k=0}^{\infty} \frac{|(f^{-j}(y))_{k} -(f^{-j}(x))_{k}|}{(k+1)^3}  \\
		&= \sum_{k\geq j+N(x,y)} \frac{|y_{k-j} - x_{k-j}|}{(k+1)^3} \leq \frac{1}{2(j+N(x,y))^2}.
\end{split} \]
Summing over $j\geq1$, we obtain
\[ \begin{split}
	\sum_{j=1}^{\infty} |\log a(f^{-j}(y)) -\log a(f^{-j}(x))| 		&\leq \int_{N(x,y)}^{\infty} \frac{1}{2t^2}\;dt = \frac{1}{2 N(x,y)}.
\end{split} \]
 % \[ \begin{split}
 % 	\sum_{j=1}^{\infty} |\log a(f^{-j}(y)) -\log a(f^{-j}(x))| &\leq \sum_{j=1}^{\infty} \frac{1}{2(j+N(x,y))^2}  \\
 % 		&= \sum_{i=1+N(x,y)}^{\infty} \frac{1}{2i^2}  \\
 % 		&\leq \int_{N(x,y)}^{\infty} \frac{1}{2t^2}\;dt = \frac{1}{2 N(x,y)}.
 % \end{split} \]
Thus, by the Weierstrass $M$-test, the series defining $h^u(x,y)$ converges absolutely and uniformly. Consequently, $H^u_{x,y}$ depends continuously on $x$ and $y$. Finally, it follows easily from the definition that $H^u_{x,y}$ satisfies properties i)-iii) from Section \ref{sec: cocycles and inv hol}. This concludes the construction of the example.
\end{example}

\begin{example}
Now we give an example of a discontinuous cocycle that falls in the context of Theorem \ref{theo: general}.
Let $f\colon M\to M$ be the shift on $M=\{0,1\}^\Z$ as in Example \ref{example2}. For any point $x=(x_n)_{n\in\Z} \in M$, let
\[
p(x)=\sum_{n=0}^{\infty}\frac{2x_n}{3^{n+1}}
\; \text{ and }\;
q(x)=\sum_{n=1}^{\infty}\frac{2x_{-n}}{3^n},
\]
consider the function
\[
F(a,b)=
\begin{cases}
\dfrac{a^2}{a^2+b^2},&(a,b)\neq(0,0),\\[1ex]
0,&(a,b)=(0,0),
\end{cases}
\]
and define
\[
P(x)=
\begin{pmatrix}
e^{F(p(x),q(x))}&0\\
0&1
\end{pmatrix}
\; \text{ and }\;
B=
\begin{pmatrix}
e^{\lambda_1}&0\\
0&e^{\lambda_2}
\end{pmatrix}
\]
for arbitrary $\lambda_1, \lambda_2 \in \R$. Then set $A(x)=P(f(x))BP(x)^{-1}$, or explicitly,

\[
A(x)=
\begin{pmatrix}
e^{\lambda_1+F(p(f(x)),q(f(x)))-F(p(x),q(x))}&0\\
0&e^{\lambda_2}
\end{pmatrix}.
\]
To see that $A$ is discontinuous, consider the sequence $x^{(k)}=(x^{(k)}_n)_{n\in\Z}$ given by
\[
x^{(k)}_n=
\begin{cases}
1,&n=k\text{ or }n=-(k+1),\\
0,&\text{otherwise}.
\end{cases}
\]
for every $k\ge1$. Then $x^{(k)}\to 0^\Z$ as $k\to\infty$ and
\[
A(x^{(k)})
=
\begin{pmatrix}
e^{\lambda_1+20/41}&0\\
0&e^{\lambda_2}
\end{pmatrix}
\]
for every $k\ge1$, whereas $A(0^\Z)=B$. Moreover, since $P$ and $P^{-1}$ are uniformly bounded, $\log^+\|A^{\pm1}\| \in L^1(\mu)$ for any ergodic probability measure $\mu$. 

On the other hand, similarly to Example \ref{example1}, $A$ admits stable and unstable holonomies given by $H^\ast_{y,z}=P(z)P(y)^{-1}$ for $\ast=s,u$. Because they are continuous for pairs $(x,y)$ such that
neither $x$ nor $y$ is $0^{\mathbb Z}$, the assumptions of Theorem \ref{theo: general} are satisfied for $W=M\setminus\{0^\Z\}$ with any ergodic probability $\mu$ other than the Dirac mass at $0^\Z$. 
\end{example}

%%%%%%%%%%%%%%%%%%%%%%%%%%%%%%%%%%%%%%%%%%%%%%%%%%%%%%%%%
%%%%%%%%%%%%%%%%%%%%%%%%%%%%%%%%%%%%%%%%%%%%%%%%%%%%%%%%%

\medskip{\bf Acknowledgments.}
L. Backes was partially supported by a CNPq-Brazil PQ fellowship under Grant No. 304806/2024-2. B. R. Lemos was supported by a doctoral fellowship from CNPq-Brazil. B. Rocha was supported by the Coordenação de Aperfeiçoamento de Pessoal de Nível Superior - Brasil (CAPES). This work was also partially supported by FAPERGS - Programa Pesquisador Gaúcho - PqG under Grant No. 25/2551-0002627-0.

% \vspace{0.1in}
% \medskip{\bf Statements and Declarations}
% \vspace{0.1in}

% \textbf{Competing Interests:} no potential conflict of interest was reported by the author.

%%%%%%%%%%%%%%%%%%%%%%%%%%%%%%%%%%%%%%%%%%%%%%%%%%%%%%%%%
%%%%%%%%%%%%%%%%%%%%%%%%%%%%%%%%%%%%%%%%%%%%%%%%%%%%%%%%%

\end{document}